\titleformat{\section}{\normalfont\bfseries}{\thesection.}{.5em}{}
\titlespacing*{\section}{0pt}{*2}{*1}
\titleformat{\subsection}{\normalfont\bfseries}{\thesubsection.}{.5em}{}
\titlespacing*{\subsection} {0pt}{7pt}{*1}
\newcommand{\mc}[1]{\mathcal{#1}} 
\newcommand{\Fc}{\mc{F}}
\newcommand{\Ic}{\mc{I}}
\newcommand{\mbf}[1]{\mathbf{#1}} 
\newcommand{\qb}{\mbf{q}}
\newcommand{\pb}{\mbf{p}}
\def\One{\mathchoice{\rm 1\mskip-4.2mu l}{\rm 1\mskip-4.2mu l}%
{\rm 1\mskip-4.6mu l}{\rm 1\mskip-5.2mu l}}
\newcommand\Ind[1]{{\One_{\{#1\}}}}
\newcommand{\set}[1]{\left\{#1\right\}}
\newcommand{\brc}[1]{\left(#1\right)}
\newtheorem{theorem}{Theorem}
\newtheorem{lemma}{Lemma}
\newtheorem{corollary}{Corollary}
\theoremstyle{definition} 
\newtheorem{remark}{Remark}
\numberwithin{equation}{section} 
\newcommand{\cA}{\mathcal{A}}
\newcommand{\cJ}{\mathcal{J}}
\newcommand{\cL}{\mathcal{L}}
\newcommand{\cF}{\mathcal{F}}
\newcommand{\cS}{\mathcal{S}}
\newcommand{\cN}{\mathcal{N}}
\newcommand{\cI}{\mathcal{I}}
\newcommand{\cH}{\mathcal{H}}
\newcommand{\cP}{\mathcal{P}}
\newcommand{\cR}{\mathcal{R}}
\newcommand{\ccab}{\mathcal{C}_{\alpha, \beta}}
\newcommand{\Exp}{{\sf E}}
\newcommand{\Expop}{{\mathbf{E}^{\pi}}}
\newcommand{\Pro}{{\sf P}}
\newcommand{\Prop}{{\mathbf{P}}^{\pi}}
\newcommand{\Hyp}{{\sf H}}
\begin{document}

\renewcommand{\baselinestretch}{1.2}

\hbox{\footnotesize\rm Statistica Sinica (2013): Revised version submitted January 21th, 2013 (Special Siegmund's Issue)}\hfill

\markboth{\hfill{\footnotesize\rm GEORGIOS FELLOURIS AND ALEXANDER TARTAKOVSKY}
\hfill} {\hfill {\footnotesize\rm 	ALMOST OPTIMAL SEQUENTIAL TESTS}
\hfill}

\renewcommand{\thefootnote}{}
$\ $\par


\fontsize{10.95}{14pt plus.8pt minus .6pt}\selectfont
\vspace{0.8pc}
\begin{center}
\large\bf ALMOST OPTIMAL SEQUENTIAL TESTS OF DISCRETE COMPOSITE HYPOTHESES \\
\end{center}
\vspace{2pt}
\centerline{\large\bf }
\vspace{.4cm}
\centerline{Georgios Fellouris and Alexander G.\ Tartakovsky}
\vspace{.4cm}
\centerline{\it The  University of Southern California}
\vspace{.55cm}
\fontsize{9}{11.5pt plus.8pt minus .6pt}\selectfont


\begin{quotation}
\noindent {\it Abstract:} We consider the problem of sequentially testing a simple null hypothesis, $\Hyp_{0}$,  versus a composite alternative hypothesis, $\Hyp_{1}$,  that consists of a finite set of densities. We study  sequential tests that are based on thresholding of  mixture-based likelihood ratio statistics and  weighted generalized likelihood ratio statistics. It is shown that both sequential tests have several asymptotic optimality properties as error probabilities go to zero. First, for any weights, they minimize the expected sample size within a constant  term under every scenario in $\Hyp_{1}$ and at least to  first order under $\Hyp_{0}$. 
Second, for appropriate weights that are specified up to a prior distribution, they minimize within an asymptotically negligible term a weighted expected sample size in $\Hyp_{1}$. Third, for a particular prior distribution, they are almost minimax with respect to the expected Kullback--Leibler divergence until stopping. Furthermore,  based on high-order asymptotic expansions for the operating characteristics, we propose prior distributions that lead to a robust behavior. Finally, based on  asymptotic analysis as well as on simulation experiments, we argue that both tests have the same performance when they are designed with the same weights. 

\par

\vspace{9pt} \noindent {\it Key words and phrases:}  Asymptotic optimality, Generalized likelihood ratio, Minimax sequential tests, Mixture-based tests.
\par
\end{quotation}\par


\fontsize{10.95}{14pt plus.8pt minus .6pt}\selectfont

\section{Introduction} \label{s:Intro}

Let $\{X_{t}\}_{t \in \mathbb{N}}$ be a sequence of independent and identically distributed (i.i.d.) random vectors with values in $\mathbb{R}^{d}$, $d \in \mathbb{N}=\{1,2,\dots\}$, and common density $f$ with respect to some non-degenerate, $\sigma$-finite measure $\nu(d x)$.  We consider the problem of \textit{sequentially} testing $\Hyp_{0}: f \in \cA_{0}$ versus $\Hyp_{1}: f \in \cA_{1}$, where $\cA_{0}$ and $\cA_{1}$ are two disjoint sets of densities with common support. That is, 
we assume that observations are acquired in a sequential manner and the goal is to select  the correct hypothesis as soon as possible. 

If  $\{\Fc_{t}\}$ is the observed filtration, i.e.,
$\Fc_{t}= \sigma(X_{1}, \ldots, X_{t})$, a \textit{sequential test}  $\delta=(T, d_T)$ is a pair that consists of an $\{\Fc_t \}$-stopping time, $T$, and an $\Fc_T$-measurable (terminal) decision rule, $d_T=d_T(X_1, \dots,X_T)\in \{0, 1\}$, that specifies which hypothesis is to be accepted once observations have stopped. In particular, $\Hyp_j$ is accepted if $d_{T}=j$, i.e.,  $\set{d_T=j}=\set{T<\infty, ~ \delta ~ \text{accepts $\Hyp_{j}$}}$, $j=0,1$. 

An ideal sequential test should have the smallest possible expected sample size under both $\Hyp_{0}$ and $\Hyp_{1}$, 
while controlling its error probabilities below given tolerance levels. Thus, if $\Pro_{f}$ is the underlying probability measure when $X_{1}$ has density $f$ 
and $\Exp_{f}$ is the corresponding expectation, we will say that $\delta^{o}=(T^o,d^o_{T^o}) \in \ccab$ is an \textit{optimal} sequential test  if 
$$
\Exp_{f}[T^{o}]=\inf_{\delta \in \ccab} \Exp_{f}[T] \quad \forall \; f \in \cA_{0} \cup \cA_{1}, 
$$
where $\ccab$ is the class of sequential tests whose maximal type-I and type-II error probabilities are bounded above by $\alpha$ and $\beta$ respectively, i.e.,
$$
\ccab=\Bigl\{\delta: \sup_{f \in \cA_{0}} \Pro_{f}(d_{T}=1) \leq \alpha \quad \text{and} \quad \sup_{f \in \cA_{1}} \Pro_{f}(d_{T}=0) \leq \beta \Bigr\}.
$$
\citet{WaldWolfowitz48} proved that an optimal sequential test  exists when both hypotheses are simple, i.e., $\cA_{0}=\{f_{0}\}$ and $\cA_{1}=\{f_{1}\}$, and is given by the Sequential Probability Ratio Test (SPRT)  that was proposed by  \citet{Wald44} in his seminal  work on Sequential Analysis:
\begin{equation} \label{sprt}
S= \inf \{t \in \mathbb{N}: \Lambda_{t}^{1} \notin (A^{-1},B)  \} \, , \quad d_{\cS}= \Ind{\Lambda^{1}_{S} \geq B} ,
\end{equation}
where $A,B>1$ are constant thresholds  selected so that $\Pro_{0}(d_{S}=1)=\alpha$ and $\Pro_{1}(d_{S}=0)=\beta$ and 
$\{\Lambda^{1}_{t}\}$ is the likelihood ratio statistic
\begin{equation} \label{lr}
\Lambda^{1}_{t} =  \prod_{n=1}^{t} \frac{f_{1}(X_{n})}{f_{0}(X_{n})} , \quad  t \in \mathbb{N}.
\end{equation}

In the case of composite hypotheses, it has only been possible to find sequential tests that are optimal in an asymptotic sense.
More specifically, we will say that $\delta^{0} \in \ccab$ is  \textit{uniformly (first-order) asymptotically optimal}, if 
\begin{equation*} 
\Exp_{f}[T^{0}]=\inf_{\delta \in \ccab} \Exp_{f}[T] \; (1+o(1)) \quad \forall \; f \in \cA_{0} \cup \cA_{1}, 
\end{equation*}
as $\alpha, \beta \rightarrow 0$. When, in particular, $\cA_{0}$ and $\cA_{1}$ can be embedded in an exponential family $\{f_{\theta}, \theta \in \Theta\}$ and 
$\Theta_{1}$ is a subset of the natural parameter space $\Theta$ so that $\theta_{0} \notin \Theta_{1}$ and 
\begin{equation}  \label{hypotheta}
\cA_{0}= \{f_{\theta_{0}}\} \quad \text{and} \quad \cA_{1}=\{f_{\theta}, \theta \in \Theta_{1}\},
\end{equation}
it is well known  (see, for example,  \cite{Lorden-AS73}, \cite{PollakSiegmund-AS75}) 
that the sequential test (\ref{sprt}) is uniformly asymptotically optimal
if $\Lambda_{t}^{1}$ is replaced either by the generalized likelihood-ratio (GLR) statistic, 
$\sup_{\theta \in \Theta} \Lambda_{t}^{\theta}$, or by a mixture-based likelihood ratio statistic, $\int_{\Theta} \Lambda_{t}^{\theta} \; w(\theta) \, d \theta$, where $w(\cdot)$ is some probability density function on $\Theta$ (weight function) and $\Lambda_{t}^{\theta}$  is defined as in (\ref{lr}) with  $f_{1}$ replaced by $f_{\theta}$. However, apart from certain tractable cases, both these statistics are not in general recursive and, as a result, they cannot be easily implemented on-line. Moreover, their computation at each step may be approximate, since it often requires discretization of the parameter space. These problems can be overcome if one uses the adaptive likelihood-ratio statistic,  $\Lambda_t = \Lambda_{t-1} (f_{\theta_{t}^*}(X_t)/f_0(X_t))$, where $\theta_{t}^{*}$ is an estimator of $\theta$ that depends on the first $t-1$ observations.
However, this approach, initially developed by \citet{RobbinsSiegmund-Berkeley70,RobbinsSiegmund-AS74} for power one tests and later extended by \citet{Pavlov-TPA90} and \citet{DragNov} for multihypothesis sequential tests, generally leads to  less efficient sequential tests, since one-stage delayed estimators use less information than the global MLE that is employed by the GLR statistic. Sequential testing of composite hypotheses in a Bayesian formulation with a small cost of observations was considered by \cite{Schwarz_AMS62, KieferSacks_AMS63, Chernoff72, Lorden_AMS68,Lai_AS88} among others.

In the present paper, we consider the problem of sequential testing  a simple null hypothesis against a \textit{discrete} alternative consisting of a finite set of densities, i.e., we assume that
\begin{equation} \label{hypo3} 
 \cA_{0}=\{f_{0}\} \quad \text{and} \quad \cA_{1}=\{f_{1},\ldots, f_{K}\},
\end{equation}
where $K$ is a positive integer. This hypothesis testing problem has two main motivations.  First, it serves as an approximation to the continuous-parameter testing problem (\ref{hypotheta}), in which $\Theta_{1}$ is replaced by a finite subset $\{\theta_{1}, \ldots, \theta_{K}\}$ of $\Theta_{1}$ so that 
$f_{j}=f_{\theta_{j}}$, $j=0,1,\ldots,K$. Indeed, as we mentioned above, the GLR statistic and mixture-based likelihood ratio statistics cannot always be easily implemented 
on-line and their computation may require discretization of the parameter space. With (\ref{hypo3}), we discretize the alternative hypothesis itself. This implies a loss of efficiency under $\Pro_{\theta}$ when $\theta \notin \{\theta_{1}, \ldots, \theta_{K}\}$, but it leads  to sequential tests that are easily  implementable on-line, a very important advantage for many applications. 

Second, problem (\ref{hypo3}) naturally applies to multisample (also known as \textit{multichannel} or  \textit{multisensor}) slippage problems, which have a wide  range of applications (see, e.g.,~\citet{Chernoff72, Tartakovskyetal-SM06, Tartakovskyetal-IEEEIT03}). As an example, consider the setup in which  $K$ sensors monitor different areas, a signal may be present in at most one of these areas and the goal is to detect  signal presence without identifying its location.  
If additionally the sensors are statistically independent and sensor $i$ takes i.i.d. observations $\{X^{i}_{t}\}_{t \in \mathbb{N}}$ with density $g^{i}_{1}$ (resp. $g^{i}_{0}$) when  signal is present (resp. absent), this problem turns out to be a special case of (\ref{hypo3}) with $X_{t}= (X_{t}^{1}, \ldots, X_{t}^{K})$ and 
\begin{equation}\label{multisensor}
f_{0}(X_{t}) = \prod_{j=1}^{K} g^{j}_{0}(X_{t}^{j}) \; ,  \quad f_{i}(X_{t})= g_{1}^{i}(X_{t}^{i}) \, \prod_{\stackrel{j=1}{ j \neq i}}^{K} g^{j}_{0}(X_{t}^{j}) ,  \quad 1 \leq i \leq K.
\end{equation}

For problem (\ref{hypo3}), we consider two sequential tests which are both parametrized by two vectors with positive components (\textit{weights}), 
$\qb_{j}=(q_{j}^{1}, \ldots, q_{j}^{K})$, $j=0,1$  and they both have the following structure: ``stop the first time $t$ at which 
either $\bar{\Lambda}_{t} \geq B$ or $\underline{\Lambda}_{t}\leq A^{-1}$ and select $\Hyp_{1}$ in the first case and $\Hyp_{0}$ in the latter'',
where  $\{\bar{\Lambda}_{t}\}$ and $\{\underline{\Lambda}_{t}\}$ are appropriate  $\{\cF_{t}\}$-adapted statistics. For the first test, which we call 
{\em Mixture Likelihood Ratio Test} (MiLRT), the corresponding statistics are given by   
$$
\bar{\Lambda}_{t}=\sum_{i=1}^{K} q_{1}^{i} \, \Lambda^{i}_{t} \quad \text{and} \quad \underline{\Lambda}_{t}=  \sum_{i=1}^{K} q_{0}^{i} \, \Lambda^{i}_{t} ;
$$
for the  second test, which we  call {\em Weighted Generalized Likelihood Ratio Test} (WGLRT), they are given by 
$$
\bar{\Lambda}_{t}= \max_{1\leq i \leq K} (q_{1}^{i} \, \Lambda^{i}_{t}) \quad \text{and} \quad \underline{\Lambda}_{t}= \max_{1\leq i \leq K} (q_{0}^{i} \, \Lambda^{i}_{t}),
$$
where $\Lambda_{t}^{i}$ is the  likelihood ratio defined in (\ref{lr}) with $f_{1}$ replaced by $f_{i}$.

\citet{Tartakovskyetal-IEEEIT03} studied the GLRT, i.e., the WGLRT with \textit{uniform} weights, $q_{0}^{i}=q_{1}^{i}=1$, $1 \leq i \leq K$, 
in the multichannel setup (\ref{multisensor}) and established its asymptotic optimality. More  specifically, it was shown that the GLRT 
is \textit{second-order}  asymptotically optimal, in the sense that it attains $\inf_{\delta \in \ccab} \Exp_{i}[T]$ within an $O(1)$ term for every $1 \leq i \leq K$, where $O(1)$ is asymptotically bounded as $\alpha, \beta \to 0$. Moreover, it was shown that, in the special case of completely asymmetric channels, the GLRT also attains $\inf_{\delta \in \ccab} \Exp_{0}[T]$ within an $O(1)$ term. (Here and in what follows we denote by $\Pro_{j}$ the underlying probability measure when $X_{1}$ has density $f_{j}$ and by $\Exp_{j}$ the corresponding expectation, $j=0,1, \ldots, K$.) 

The first contribution of the present work is that this uniform, second-order asymptotic optimality property is established for both the MiLRT and the WGLRT with arbitrary weights $\qb_{0}$ and $\qb_{1}$ in the more general setup of problem (\ref{hypo3}). However, the main question we want to answer is how to select these weights in order to obtain further ``benefits''. In this direction, we show that if $\mathbf{p}=(p_{1}, \ldots, p_{K})$ is an arbitrary probability mass function, which can be interpreted as a prior distribution on $\Hyp_{1}$,  and $\qb_{0}$, $\qb_{1}$ are selected so that
\begin{equation} \label{choose}
q_{0}^{i}= p_{i} \cL_{i} \quad \text{and} \quad q_{1}^{i}= p_{i} / \cL_{i}, \quad 1 \leq i \leq K,
\end{equation} 
then both tests attain $\inf_{\delta \in \ccab}  \Exp^{\pb}[T]$ within an $o(1)$ term, where $\Exp^{\pb}$ is expectation with respect to the weighted probability measure $\Pro^{\pb}=\sum_{i=1}^{k} p_{i} \Pro_{i}$ and  the $\cL$-numbers $\{\cL_{i}\}$, formally introduced in \eqref{L}, provide overshoot corrections that allow us to achieve this refined asymptotic optimality property. 

In addition, we find a prior distribution $\hat{\pb}$ which makes both tests \textit{almost minimax} with respect to the expected Kullback--Leibler (KL) information (divergence) that is accumulated until stopping, in the sense that they attain within an $o(1)$ term
$$\inf_{\delta \in \ccab} \;  \max_{1 \leq i \leq K} \;  (I_{i} \, \Exp_{i}[T]),$$ 
where $I_{i}$ is the KL-information number (see \eqref{I}). In this way, we generalize the corresponding result in \citet{feltar12}, where this minimax problem was considered in the context of open-ended, mixture-based  sequential  tests.

Moreover, we compare numerically the tests with this (almost) least favorable prior distribution  with some alternative choices for $\pb$ in the context of the multichannel problem (\ref{multisensor}) with  channels that take exponential or Gaussian observations. Based on high-order asymptotic expansions for the operating characteristics of both tests, we find that selecting $p_{i}$ to be proportional to $I_{i}$ or $\cL_{i}$ leads to a much more robust behavior than the one  induced by $\hat{\pb}$, especially when the channels have very different signal strengths. Finally, based on these asymptotic expansions as well as on Monte Carlo simulations, we argue that both the WGLRT and the MiLRT have essentially the same performance when they are designed with the same weights.

The remainder of the paper is organized as follows. In Section~\ref{sec:Notation}, we introduce basic notation and present some preliminary results. In Section~\ref{sec:AC}, we obtain asymptotic approximations to the operating characteristics of the two tests, whereas in Section \ref{sec:AM} we establish their asymptotic optimality properties. In Section~\ref{sec:p}, we compare different specifications for $\mathbf{p}$ and 
in Section~\ref{sec:MC} we  compare the tests using Monte Carlo simulations.
We conclude in  Section~\ref{sec:Concl}.

\section{Notation, Assumptions and Definitions} \label{sec:Notation}

\subsection{Elements of renewal theory}

For every $1 \leq i \leq K$, we set $Z_{t}^{i}=\log \Lambda_{t}^{i}$, where $\Lambda_{t}^{i}$ is  given by (\ref{lr}) with $f_{1}$ replaced by $f_{i}$. 
We quantify the ``distance'' between $f_{i}$ and $f_{0}$ using the $\cL$-number
\begin{equation} \label{L}
\cL_{i} = \exp\left\{ - \sum_{n=1}^{\infty} n^{-1} \Bigl[\Pro_{0}(Z_{n}^{i}>0)+ \Pro_{i}(Z_{n}^{i} \leq 0) \Bigr] \right\},
\end{equation}
as well as the KL information numbers
\begin{align} 
I_i &= \Exp_i[Z_1^i]= \int \log \Bigl( \frac{f_i(x)}{f_0(x)} \Bigr) f_{1}(x) \, \nu(dx), \label{I}\\
I_0^i &= \Exp_0[-Z_1^i]= \int \log \Bigl(\frac{f_0(x)}{f_i(x)} \Bigr) f_{0}(x) \, \nu(dx).
\end{align}
Without loss  of generality, we assume that $f_{1}, \ldots, f_{K}$ are ordered with respect to their KL divergence from $f_{0}$ so that
\begin{equation} \label{order}
I_{0}=\min_{1\leq i \leq K} I_{0}^{i}= I_{0}^{1} = \cdots = I_{0}^{r} < I_{0}^{r+1} \leq \cdots \leq I_{0}^{K}.
\end{equation} 
Note that  $r=1$ corresponds to the asymmetric situation in which $I_{0}$ is attained by a unique index $i=1$. On the other hand, 
$r=K$ corresponds to the completely symmetric situation in which $I_{0}^{i}$ is the same for every $1\leq i \leq K$. The latter case occurs, for example, 
in the multisample slippage problem \eqref{multisensor} when $g_0^i=g_0$ and $g_1^i=g_1$, $1 \leq i  \leq K$, i.e., when the densities do not depend on the population (or sensor, in a multisensor context).

In order to avoid trivial cases, we assume that $f_{i}$ and $f_{0}$ do not coincide almost everywhere, which implies that $I_{i}$, $I_{0}^{i}>0$ for every $1 \leq i \leq K$. We  also assume throughout the paper that $Z_1^i$ is non-arithmetic under $\Pro_0$ and $\Pro_i$ and that $I_i$, $I_{0}^{i}<\infty$ for every $1 \leq i \leq K$. Then, if we define the first hitting times 
$$
\tau_{c}^{i}= \inf\{t: Z_{t}^{i}\geq c\}, \quad \sigma_{c}^{i}= \inf\{t: Z_{t}^{i} \leq -c\}, \quad c>0,
$$
it is well known that the overshoots $Z_{\tau_{c}^{i}}^{i}- c$ and $|Z_{\sigma_{c}^{i}}^{i}+c|$  have 
well defined asymptotic distributions under $\Pro_{i}$ and $\Pro_{0}$ respectively, i.e.,
$$
\cH_{i}(x) = \lim_{c \rightarrow \infty} \Pro_{i}( Z_{\tau_{c}^{i}}^{i}- c \leq x) , \quad \cH_{0}^{i}(x) =  \lim_{c \rightarrow \infty} \Pro_{0}( |Z_{\sigma_{c}^{i}}^{i} +c|   \leq x ) , \quad x >0,
$$
and consequently, we can define the following Laplace transforms
$$
\gamma_{i}= \int_0^\infty e^{-x} \, \cH_{i}(dx), \quad \gamma_{0}^{i}= \int_0^\infty e^{-x} \, \cH_{0}^{i}(dx),$$
which connect the KL-numbers with the $\cL$-numbers as follows: $\cL_{i} = \gamma_{i} \, I_{i} = \gamma_{0}^{i} \,  I_{0}^{i}$ (see, e.g., Theorem 5 in \cite{Lorden-AS77}). These quantities are very important, since they allow us to achieve with great accuracy the desired error probabilities of the SPRT, $\delta^{i}=(S^{i},d_{S^{i}})$, for testing $f_{0}$ against $f_{i}$ (that is, $\delta^{i}$ is given by  \eqref{sprt} with $\Lambda_t^1$ replaced by $\Lambda_t^i$). Specifically, if  $A=\gamma_{0}^{i}/\beta$ and $B=\gamma_{i}/\alpha$, then $\Pro_{0}(d_{S^{i}}=1)=\alpha (1+o(1))$ and $\Pro_{i}(d_{S^{i}}=0)=\beta(1+o(1))$ as $ \alpha, \beta \rightarrow 0$ (see \cite{sieg-JRSSSB}).

If additionally  second moments are finite, $\Exp_{i}[(Z_{1}^{i})^{2}],  \Exp_{0}[(Z_{1}^{i})^{2}]<\infty$, then $\cH_{i}$ and $\cH_{0}^{i}$ have finite means (average limiting overshoots), 
$$
\kappa_{i}= \int_0^\infty x \, \cH_{i}(dx), \quad \kappa_{0}^{i} = \int_0^\infty x \, \cH_{0}^{i}(dx),  
$$
and we have the following asymptotic approximations for the expected sample sizes of the SPRT $\delta^{i}$ as $\alpha,\beta \rightarrow 0$ so that $\alpha |\log \beta|+ \beta |\log \alpha| \rightarrow 0$:
\begin{align}
\Exp_{i}[S^{i}] &=\frac{1}{I_i} \brc{|\log \alpha| + \kappa_i +  \log \gamma_{i}} + o(1), \label{ESSi_SPRT} \\
\Exp_{0}[S^{i}] &=\frac{1}{I_{0}^{i}} \brc{|\log \beta| + \kappa_{0}^{i} +  \log \gamma_{0}^{i}} + o(1). \label{ESSi_SPRT0}
\end{align}

\subsection{MiLRT and WGLRT}

We will say that $\qb=(q^{1},  \ldots, q^{K})$ is a \textit{weight}, if $q_{i}>0$ $\forall$ $1 \leq i \leq K$. For any weight $\qb$,
we set $|\qb|=\sum_{i=1}^{K} q^{i}$ and we define 
\begin{align}
\Lambda_{t}(\qb) &= \sum_{i=1}^{K} q^{i} \, \Lambda^{i}_{t}, \quad \hat{\Lambda}_{t}(\qb)= \max_{1\leq i \leq K} \{q^{i} \, \Lambda^{i}_{t}\}, \label{LL} \\
Z_{t}(\qb) &= \log \Lambda_{t}(\qb), \quad  \hat{Z}_{t}(\qb) = \log \hat{\Lambda}_{t}(\qb). \label{LZ}
\end{align}
The emphasis of this paper is on the MiLRT, $\delta_{\mathrm{mi}}=(M,d_M)$, and the WGLRT, $\delta_{\mathrm{gl}}=(N,d_N)$, which are  parametrized by two arbitrary weights $\qb_{0}$, $\qb_{1}$ and are defined as follows:
\begin{align*} 
 M &=  \inf \{t :  \Lambda_{t}(\qb_{1}) \geq B  \; \text{or} \; \Lambda_{t}(\qb_{0}) \leq A^{-1}\} , \; d_{M}=\Ind{\Lambda_{M}(\qb_{1}) \geq B} , \\
 N &=  \inf \{t :  \hat{\Lambda}_{t}(\qb_{1}) \geq B  \; \text{or} \; \hat{\Lambda}_{t}(\qb_{0}) \leq  A^{-1} \} , \; d_{N}= \Ind{\hat{\Lambda}_{N}(\qb_{1}) \geq B}.
\end{align*}
Alternatively, if we introduce the following  one-sided stopping times
\begin{align*}
M^{1}_{B}&= \inf \Bigl\{t : \Lambda_{t}(\qb_{1}) \geq  B \Bigr\} , \quad
M^{0}_{A} = \inf \Bigl\{t: \Lambda_{t}(\qb_{0})  \leq A^{-1} \Bigr\} , \\
N^{1}_{B} &= \inf \Bigl\{t :\hat{\Lambda}_{t}(\qb_{1})  \geq B \Bigr\}, \quad
N^{0}_{A} =  \inf\{t: \hat{\Lambda}_{t}(\qb_{0})   \leq  A^{-1} \},
\end{align*} 
$\delta_{\mathrm{mi}}$ and $\delta_{\mathrm{gl}}$ can be defined as follows
\begin{align} 
 M &= \min\{M_{A}^{0}, M_{B}^{1}\}  , \quad  d_{M}= \Ind{M_{B}^{1} \leq M_{A}^{0}}, \label{M} \\
 N &= \min\{N_{A}^{0}, N_{B}^{1} \} , \quad d_{N}= \Ind{N_{B}^{1} \leq N_{A}^{0}}. \label{N}
\end{align}
We also define the associated overshoots
\begin{align} 
\eta &= [Z_{M}(\qb_{1}) - \log B] \, \Ind{d_{M}=1} -[Z_{M}(\qb_{0}) + \log A] \, \Ind{d_{M}=0},  \label{eta}\\
\hat{\eta} &= [\hat{Z}_{N}(\qb_{1}) - \log B] \, \Ind{d_{N}=1} - [\hat{Z}_{N}(\qb_{0}) + \log A] \, \Ind{d_{N}=0},  \label{heta}
\end{align}
which play an important role in the asymptotic analysis of the operating characteristics of the two tests.

\section{Asymptotic Approximations for the  Operating Characteristics}\label{sec:AC}

In this section, we obtain  asymptotic inequalities and approximations for the error probabilities and expected sample sizes of the MiLRT and the WGLRT. In order to do so, we rely on the following decompositions for $Z(\qb)$ and $\hat{Z}(\qb)$, which hold for every $1\leq i \leq K$ and any weight $\qb=(q^{1}, \ldots, q^{K})$,
\begin{align}
Z_{t}(\qb) &=  Z_{t}^{i} + \log q^{i} + Y_{t}^{i}(\qb) , \quad t \in \mathbb{N} ,  \label{Z} \\
\hat{Z}_{t}(\qb) &= Z_{t}^{i} + \log q^{i} + \hat{Y}_{t}^{i}(\qb) , \quad t \in \mathbb{N},  \label{hZ}
\end{align}
where the sequences $Y^{i}(\qb)$ and $\hat{Y}^{i}(\qb)$ are defined as follows:
\begin{align}
Y_{t}^{i}(\qb) &=  \log \brc{1 + \sum_{\stackrel{j=1}{ j \neq i}}^{K} \frac{q^{j}}{q^{i}} \, \frac{\Lambda_{t}^{j}}{\Lambda_{t}^{i}} }, \quad t \in \mathbb{N} \label{Y},
\\
\hat{Y}_{t}^{i}(\qb) &=  \log \brc{ \max \set{1, \max_{1 \leq j \neq i \leq K} \frac{q^{j}}{q^{i}} \, \frac{\Lambda_{t}^{j}}{\Lambda_{t}^{i}}}},
\quad t \in \mathbb{N}. \label{hY} 
\end{align}
From the Strong Law of Large Numbers (SLLN) it  follows that, for every $j \neq i$,
$\Pro_{i}(\Lambda_{t}^{j}/\Lambda^{i}_{t} \rightarrow 0)=1$. This implies that $Y^{i}(\qb)$ and $\hat{Y}^{i}(\qb)$ also converge to 0 $\Pro_{i}$-a.s., and consequently, they are \textit{slowly changing} under $\Pro_{i}$ (for a precise definition of ``slowly changing'' we refer to \citet{sieg-book}, page 190). Since  $Z^{i}_t$ is a random walk under $\Pro_{i}$, from this observation and decompositions (\ref{Z})--(\ref{hZ}) it follows that $Z(\qb)$ and $\hat{Z}(\qb)$ are \textit{perturbed} random walks under $\Pro_{i}$. 

Similarly, the SLLN implies that, \textit{in the special case where $r=1$}, $\Pro_{0}(\Lambda_{t}^{j}/\Lambda^{1}_{t} \rightarrow 0)=1$ for every $j >1$. Therefore, $Y^{1}(\qb)$ and $\hat{Y}^{1}(\qb)$ also converge to 0 $\Pro_{0}$-a.s. and from (\ref{Z})--(\ref{hZ}) with $i=1$ it follows that  $Z(\qb)$ and $\hat{Z}(\qb)$ are perturbed random walks  under $\Pro_{0}$ when $r=1$. 

These properties allow us to apply nonlinear renewal theory for perturbed random walks (see \cite{Woodroofe-AP76, Woodroofe-book82}, \cite{LaiSieg-77, LaiSieg-79}, \cite{sieg-book}) in order to obtain asymptotic approximations for the expected sample sizes of the tests $\delta_{\rm mi}$ and $\delta_{\rm gl}$ under $\Pro_{i}$ for every $1 \leq i \leq K$, as well as under $\Pro_{0}$ when $r=1$. An asymptotic approximation for $\Exp_{0}[N]$ when $r>1$ can be obtained based on the nonlinear renewal theory of  \citet{Zhang-AP88} using the  following representation for $N_{A}^{0}$:
\begin{align} \label{rip}
N_{A}^{0}  &=\inf \Bigl\{t: \ell_{t}^{0} \geq \log A + \max_{1 \leq i \leq K} ( \log q_{0}^{i} + \ell_{t}^{i} )  \Bigr\} ,
\end{align}
where $\ell^{j}$ is the log-likelihood process under $\Pro_{j}$ for $j=0,1, \ldots, K$, i.e.,
\begin{equation} \label{U}
\ell_{t}^{j}=\sum_{n=1}^{t} \log f_{j}(X_{n}), \quad  t \in \mathbb{N}.
\end{equation} 
For the latter approximation we also need some additional notation. Specifically, for any  $1\leq i \leq K$, we set $\mu_{i} = \Exp_{0}[\log f_{i}(X_{1})]$, so that $I_{0}^{i}=\Exp_{0}[\log f_{0}(X_{1})]-\mu_{i}$. Moreover, we set
$\mu= \max_{1 \leq i \leq K} \mu_{i}$, so that $I_{0}= \Exp_{0}[\log f_{0}(X_{1})]-\mu$, we define the  $r$-dimensional random vector 
\begin{equation} \label{W}
W=( \log f_{1}(X_{1})-\mu, \ldots,  \log f_{r}(X_{1})-\mu),
\end{equation}
and we denote by $\Sigma$ its covariance matrix under $\Pro_{0}$. Finally, we set
\begin{equation}\label{drhr}
d_{r}=  \frac{h_{r}}{2 \sqrt{ I_{0}}} , \quad h_{r}= \int_{\mathbb{R}^{r}} (\max_{1 \leq i \leq r} x_{i}) \, \phi_{\Sigma}(x) \, dx ,
\end{equation}
where $\phi_{\Sigma}$ is the density of an $r$-dimensional, zero-mean, Gaussian random vector with covariance matrix $\Sigma$.

\subsection{Asymptotic bounds for the error probabilities}

We start with the following lemma. 

\begin{lemma} \label{L11}
For any $1\leq i\leq K$,
\begin{equation} \label{correct2}
\Exp_{i}[ e^{-\eta} \; \Ind{d_{M}=1}]  \rightarrow \gamma_{i} , \, \quad \Exp_{i}[ e^{-\hat{\eta}} \; \Ind{d_{N}=1}]  \rightarrow \gamma_{i} \quad \text{as}~ A,B \rightarrow \infty.
\end{equation}
If additionally $r=1$, then
\begin{equation} \label{correct}
\Exp_{0}[ e^{-\eta} \; \Ind{d_{M}=0}]  \rightarrow \gamma_{0}^{1} , \, \quad \Exp_{0}[ e^{-\hat{\eta}} \; \Ind{d_{N}=0}]  \rightarrow \gamma_{0}^{1} \quad \text{as}~ A,B \rightarrow \infty.
\end{equation}
\end{lemma}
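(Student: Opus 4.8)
The plan is to reduce each two-sided overshoot in \eqref{eta}--\eqref{heta} to a one-sided first-passage overshoot and then invoke nonlinear renewal theory for perturbed random walks, exploiting the decompositions \eqref{Z}--\eqref{hZ} that have already been identified in the text. I would treat the four limits in parallel, since they all share the same structure.

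Consider first \eqref{correct2} under $\Pro_i$ for fixed $1 \le i \le K$. On $\{d_M=1\} = \{M_B^1 \le M_A^0\}$ one has $M=M_B^1$ and, by \eqref{eta}, $\eta = Z_{M_B^1}(\qb_1) - \log B$. By \eqref{Z}, $Z_t(\qb_1) = Z_t^i + \log q_1^i + Y_t^i(\qb_1)$, so this is exactly the overshoot of the perturbed random walk $Z_t^i + Y_t^i(\qb_1)$ above the level $\log B - \log q_1^i$. Because $Y_t^i(\qb_1)$ is slowly changing and $\to 0$ under $\Pro_i$, and $Z^i$ is a random walk of positive drift $I_i$ whose limiting overshoot law over high levels is $\cH_i$ by definition, nonlinear renewal theory gives that this overshoot converges weakly to $\cH_i$ as $B\to\infty$; the additive constant $\log q_1^i$ is harmless since the level still tends to $+\infty$. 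I would then restore the indicator by writing
$$
\Exp_i\bigl[e^{-\eta}\Ind{d_M=1}\bigr] = \Exp_i\bigl[e^{-\omega_B}\bigr] - \Exp_i\bigl[e^{-\omega_B}\Ind{d_M=0}\bigr],
$$
where $\omega_B := Z_{M_B^1}(\qb_1) - \log B$ is a.s.\ finite and nonnegative. Since $x\mapsto e^{-x}$ is bounded and continuous, the first term tends to $\int_0^\infty e^{-x}\,\cH_i(dx) = \gamma_i$; the second is bounded by $\Pro_i(d_M=0)$, which vanishes because $\Lambda_t(\qb_0)\ge q_0^i e^{Z_t^i}$ stays bounded away from $0$ under $\Pro_i$, forcing $\Pro_i(M_A^0<\infty)\to 0$. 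This proves the first limit in \eqref{correct2}; the second follows verbatim after replacing \eqref{Z} by \eqref{hZ}, since $\hat{Y}_t^i(\qb_1)$ is likewise slowly changing and vanishes $\Pro_i$-a.s.

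For \eqref{correct} I would use the assumption $r=1$, which (as noted after \eqref{hY}) makes $Y_t^1(\qb_0)$ and $\hat{Y}_t^1(\qb_0)$ slowly changing and $\to 0$ under $\Pro_0$. On $\{d_M=0\}$ one has $M=M_A^0$ and, by \eqref{eta}, $\eta = -[Z_{M_A^0}(\qb_0)+\log A]$, which is the overshoot of the perturbed walk $-Z_t(\qb_0) = -Z_t^1 - \log q_0^1 - Y_t^1(\qb_0)$ above $\log A$. The underlying walk $-Z^1$ has positive drift $I_0^1$ and limiting overshoot law $\cH_0^1$, so the identical argument gives $\Exp_0[e^{-\eta}\Ind{d_M=0}]\to\gamma_0^1$ once a correction bounded by $\Pro_0(d_M=1)\to 0$ is discarded, and the WGLRT case follows through \eqref{hZ}.

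The delicate step is the nonlinear renewal claim: that the overshoot of the \emph{perturbed} walk inherits the overshoot law $\cH_i$ (resp.\ $\cH_0^1$) of the underlying walk. This rests on the slowly-changing property of $Y^i(\qb),\hat{Y}^i(\qb)$ and on the non-arithmetic hypothesis on $Z_1^i$, both in force here, and it is precisely why the $\Pro_0$ statements require $r=1$: when $r>1$ several ratios $\Lambda_t^j/\Lambda_t^1$ do not vanish under $\Pro_0$, the perturbation ceases to be slowly changing, and $Z(\qb_0)$ is no longer a perturbed random walk. Once weak convergence is in hand, passing to $\Exp[e^{-\eta}\,\cdots]$ is routine because $e^{-\eta}$ is bounded and the probability of the wrong terminal decision is asymptotically negligible.
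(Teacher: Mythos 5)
Your proof is correct and takes essentially the same route as the paper's: identify $\eta$ (resp.\ $\hat{\eta}$) on the relevant decision event with the overshoot of the perturbed random walk $Z_t(\qb_1)$, $\hat{Z}_t(\qb_1)$ under $\Pro_i$ (resp.\ $-Z_t(\qb_0)$, $-\hat{Z}_t(\qb_0)$ under $\Pro_0$ when $r=1$), apply nonlinear renewal theory to get weak convergence to $\cH_i$ (resp.\ $\cH_0^1$), and pass to the limit in the expectation using boundedness of $e^{-x}$. Your only deviation is that you make explicit a step the paper leaves implicit --- splitting off the wrong-decision event and showing its contribution vanishes since $\Pro_i(d_M=0)\to 0$ and $\Pro_0(d_M=1)\to 0$ --- which is a minor (and welcome) refinement of the same argument rather than a different approach.
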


\begin{proof}
We will only prove the first assertions in \eqref{correct2} and \eqref{correct}, since the other ones can be proven in an identical way.

Since $M=M_{B}^{1}=\inf \{ t: Z_{t}(\qb_{1})\geq \log B \}$ and $\eta = Z_{M_{B}^{1}}(\qb_{1})- \log B$ on $\{d_{M}=1\}=\{M_{B}^{1} \leq M_{A}^{0}\}$, 
and $\{Z_{t}(\qb_{1}) =  Z_{t}^{i} + \log q_{1}^{i} + Y_{t}^{i}(\qb_{1})\}$ is a perturbed random walk under $\Pro_{i}$, from nonlinear renewal theory (see, e.g.,  Theorem 9.12 in \citet{sieg-book}) it follows that  $\eta$ converges in distribution to $\cH_{i}$ under $\Pro_{i}$ on $\{d_{M}=1\}$. Therefore, the  Bounded Convergence Theorem yields $\Exp_{i}[ e^{-\eta} \; \Ind{d_{M}=1}]  \rightarrow \gamma_{i}$.

Since $M =M_{A}^{0} = \inf \{ t: -Z_{t}(\qb_{0}) \geq \log A \}$ and $\eta = |Z_{M_{A}^{0}}(\qb_{0}) + \log A|$  on $\{d_{M}=0\}=\{M_{B}^{1}>M_{A}^{0}\}$,
and $\{-Z_{t}(\qb_{0}) =  -Z_{t}^{1} - \log q_{0}^{1} - Y_{t}^{1}(\qb_{0})\}$ is a perturbed random walk under $\Pro_{0}$ when $r=1$, 
the same argument as above applies to show that $\Exp_{0}[ e^{-\eta} \; \Ind{d_{M}=0}]  \rightarrow \gamma_{0}^{1}$.
\end{proof}

The following theorem provides exact and asymptotic upper bounds on the error probabilities of $\delta_{\mathrm{mi}}$ and $\delta_{\mathrm{gl}}$.

\begin{theorem}  \label{T0}
\begin{enumerate}
\item[(a)]  For any $A,B>1$,
\begin{align} 
& \Pro_{0}(d_{M}=1)  \leq  \frac{|\qb_{1}|}{B} , \quad   \Pro_{0}(d_{N}=1)\leq   \frac{|\qb_{1}|}{B}  \label{pr00},\\
& \Pro_{i}(d_{M}=0) \leq \frac{1}{A \, q_{0}^{i}} \, , \quad \Pro_{i}(d_{N}=0) \leq \frac{1}{A \, q_{0}^{i}} , \quad 1 \leq i \leq K. \label{pr000}
\end{align}

\item[(b)]  As $A,B \rightarrow \infty$,
\begin{align}
\Pro_{0}(d_{M}=1) &=  \frac{1}{B} \, \Bigl(\sum_{j=1}^{K} q_{1}^{j} \, \gamma_{j} \Bigr)  \, (1+o(1)),  \label{pr1} \\
\Pro_{0}(d_{N}=1)  &\leq \frac{1}{B} \, \Bigl(\sum_{j=1}^{K} q_{1}^{j} \, \gamma_{j}\Bigr)  \, (1+o(1)). \label{pr3}
\end{align}
If additionally $r=1$, then for every $1 \leq i \leq K$ 
\begin{equation} \label{correct3}
\Pro_{i}(d_{M}=0) \leq \frac{\gamma_{0}^{1}}{q_{0}^{i} A} (1+o(1)) , \quad \Pro_{i}(d_{N}=0) \leq \frac{\gamma_{0}^{1}}{q_{0}^{i}A} (1+o(1)) .
\end{equation}
\end{enumerate}
\end{theorem}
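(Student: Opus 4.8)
The plan is to obtain all four exact inequalities in part (a) from the elementary martingale structure of the likelihood ratios, and then to sharpen them in part (b) by a likelihood-ratio change of measure that exposes the overshoots $\eta$ and $\hat{\eta}$, at which point Lemma \ref{L11} supplies the limits. For part (a), I would first note that each $\Lambda_{t}^{i}$ is a nonnegative $\Pro_{0}$-martingale with mean one, so $\Lambda_{t}(\qb_{1})=\sum_{i}q_{1}^{i}\Lambda_{t}^{i}$ is a $\Pro_{0}$-martingale with $\Exp_{0}[\Lambda_{t}(\qb_{1})]=|\qb_{1}|$. On $\{d_{M}=1\}$ we have $\Lambda_{M}(\qb_{1})\geq B$, so from $\Ind{d_{M}=1}\leq B^{-1}\Lambda_{M}(\qb_{1})\Ind{d_{M}=1}$, optional stopping (Fatou applied to the stopped martingale $\Lambda_{M\wedge t}(\qb_{1})$) gives $\Pro_{0}(d_{M}=1)\leq |\qb_{1}|/B$. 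For the WGLRT the only change is that on $\{d_{N}=1\}$ we have $\hat{\Lambda}_{N}(\qb_{1})\geq B$, and since $\Lambda_{N}(\qb_{1})\geq\hat{\Lambda}_{N}(\qb_{1})$ by \eqref{LL}, the same estimate applies verbatim. The type-II bounds are dual: $1/\Lambda_{t}^{i}$ is a nonnegative $\Pro_{i}$-martingale of mean one, and on $\{d_{M}=0\}$ (resp. $\{d_{N}=0\}$) the domination $q_{0}^{i}\Lambda_{M}^{i}\leq\Lambda_{M}(\qb_{0})\leq A^{-1}$ (resp. $q_{0}^{i}\Lambda_{N}^{i}\leq\hat{\Lambda}_{N}(\qb_{0})\leq A^{-1}$) forces $1/\Lambda_{M}^{i}\geq Aq_{0}^{i}$, so Markov's inequality for $1/\Lambda_{M}^{i}$ under $\Pro_{i}$ yields $\Pro_{i}(d_{M}=0)\leq 1/(Aq_{0}^{i})$.

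For the refined type-I estimate in part (b), I would introduce the mixture measure $\Qro$ determined by $d\Qro/d\Pro_{0}|_{\Fc_{t}}=\Lambda_{t}(\qb_{1})/|\qb_{1}|$, equivalently $\Qro=\sum_{i}(q_{1}^{i}/|\qb_{1}|)\,\Pro_{i}$. By Wald's likelihood-ratio identity, $\Pro_{0}(d_{M}=1)=\Exp_{\Qro}[(|\qb_{1}|/\Lambda_{M}(\qb_{1}))\Ind{d_{M}=1}]$. On $\{d_{M}=1\}$ the MiLRT stops because the sum statistic itself crosses $B$, so $\Lambda_{M}(\qb_{1})=Be^{\eta}$ by the definition \eqref{eta}; substituting gives $\Pro_{0}(d_{M}=1)=B^{-1}\sum_{i}q_{1}^{i}\,\Exp_{i}[e^{-\eta}\Ind{d_{M}=1}]$, and the first convergence in \eqref{correct2} identifies each term's limit as $\gamma_{i}$, producing the claimed equality. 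For the WGLRT the same identity holds with $N$ in place of $M$, but now $\Lambda_{N}(\qb_{1})\geq\hat{\Lambda}_{N}(\qb_{1})=Be^{\hat{\eta}}$ on $\{d_{N}=1\}$ (by \eqref{LL} and \eqref{heta}), so equality degrades to $\Pro_{0}(d_{N}=1)\leq B^{-1}\sum_{i}q_{1}^{i}\,\Exp_{i}[e^{-\hat{\eta}}\Ind{d_{N}=1}]$, and the second limit in \eqref{correct2} again returns $\sum_{i}q_{1}^{i}\gamma_{i}$.

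The refined type-II bounds under $r=1$ I would obtain by the reverse change of measure, from $\Pro_{i}$ to $\Pro_{0}$: since $d\Pro_{i}/d\Pro_{0}|_{\Fc_{t}}=\Lambda_{t}^{i}$, we have $\Pro_{i}(d_{M}=0)=\Exp_{0}[\Lambda_{M}^{i}\Ind{d_{M}=0}]$. On $\{d_{M}=0\}$ definition \eqref{eta} gives $\Lambda_{M}(\qb_{0})=A^{-1}e^{-\eta}$, and $\Lambda_{M}(\qb_{0})\geq q_{0}^{i}\Lambda_{M}^{i}$ forces $\Lambda_{M}^{i}\leq A^{-1}e^{-\eta}/q_{0}^{i}$; hence $\Pro_{i}(d_{M}=0)\leq (q_{0}^{i}A)^{-1}\Exp_{0}[e^{-\eta}\Ind{d_{M}=0}]$, and the first limit in \eqref{correct} (available precisely because $r=1$) gives $\gamma_{0}^{1}$. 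The WGLRT case is identical with $\hat{\eta}$ replacing $\eta$ and $\hat{\Lambda}_{N}(\qb_{0})\geq q_{0}^{i}\Lambda_{N}^{i}$ replacing the sum.

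I expect the main subtlety to be the asymmetry between the two tests. The sum statistic $\Lambda_{t}(\qb_{1})$ is a genuine $\Pro_{0}$-martingale and equals $Be^{\eta}$ exactly at the MiLRT boundary, which is what upgrades the estimate to an asymptotic \emph{equality}; the max statistic $\hat{\Lambda}_{t}(\qb_{1})$ is only a submartingale, so for the WGLRT I must route the argument through the dominating sum and settle for an inequality. The second delicate point is that every refined type-II estimate rests on the convergences \eqref{correct}, which hold only when $r=1$ — exactly the regime in which $Z(\qb_{0})$ and $\hat{Z}(\qb_{0})$ remain perturbed random walks under the ``wrong'' measure $\Pro_{0}$, so that the nonlinear renewal argument underlying Lemma \ref{L11} can be invoked under $\Pro_{0}$.
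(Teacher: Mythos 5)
Your proposal is correct and takes essentially the same route as the paper: the change of measure to the mixture $\Pro^{\qb_{1}}$ that exposes the overshoot $\eta$ (giving the asymptotic equality \eqref{pr1}), the domination $\Lambda_{N}(\qb_{1})\geq\hat{\Lambda}_{N}(\qb_{1})$ that downgrades the WGLRT estimate to the inequality \eqref{pr3}, the reverse change of measure $\Pro_{i}\mapsto\Pro_{0}$ combined with $\Lambda(\qb_{0})\geq q_{0}^{i}\Lambda^{i}$ (equivalently $Y^{i}(\qb_{0})\geq 0$) for the type-II bounds, and Lemma \ref{L11} for the limits, with the $r=1$ restriction entering exactly where you say it does. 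The only cosmetic difference is in part (a), where you argue via optional stopping of the nonnegative $\Pro_{0}$-martingale $\Lambda_{t}(\qb_{1})$ (and of $1/\Lambda_{t}^{i}$ under $\Pro_{i}$), whereas the paper simply reads the same exact bounds off the change-of-measure identities using positivity of $\eta$ and $\hat{\eta}$; the content is identical.
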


\begin{proof}
Let us define the probability measure $\Pro^{\qb_{1}}= \frac{1}{|\qb_{1}|} \, \sum_{i=1}^{K} q_{1}^{i}  \, \Pro_{i}$ 
and denote  by $\Exp^{\qb_{1}}$ expectation with respect to $\Pro^{\qb_{1}}$. Since 
$$
\frac{d \Pro^{\qb_{1}}}{d \Pro_{0}} \Big|_{\cF_{t}}= \frac{1}{|\qb_{1}|} \sum_{i=1}^{K} q_{1}^{i} \Lambda_{t}^{i}= \frac{1}{|\qb_{1}|} e^{Z_{t}(\qb_{1})},
$$
changing the measure $\Pro_{0}\mapsto \Pro^{\qb_{1}}$ we have
\begin{equation} \label{is1}
\Pro_{0}(d_{M}=1) = |\qb_{1}| \; \Exp^{\qb_{1}}[ e^{-Z_{M}(\qb_{1})}   \, \Ind{d_{M}=1}]  
                                      =  \sum_{i=1}^{K} q_{1}^{i} \, \Exp_{i}[ e^{-Z_{M}(\qb_{1})}  \, \Ind{d_{M}=1}].
   \end{equation}
Since $Z_{M}(\qb_{1}) = \log B +\eta$ on $\{d_{M}=1\}$, we obtain
\begin{equation} \label{is11}
\Pro_{0}(d_{M}=1) = \frac{1}{B} \sum_{i=1}^{K} q_{1}^{i} \, \Exp_{i}[ e^{-\eta}  \, \Ind{d_{M}=1}].  
\end{equation}
Since $\eta$ is positive, the first inequality in (\ref{pr00}) immediately follows from \eqref{is11}, whereas (\ref{pr1}) follows from (\ref{correct2}).  A similar argument as the one that led to (\ref{is1}), along with the fact that $Z_{t}(\qb_{1}) \geq \hat{Z}_{t}(\qb_{1})$, yields
\begin{align} \label{is2}
\Pro_{0}(d_{N}=1) &= \sum_{i=1}^{K} q_{1}^{i} \, \Exp_{i}[ e^{-Z_{N}(\qb_{1})}  \, \Ind{d_{N}=1}] \\
&\leq \sum_{i=1}^{K} q_{1}^{i} \, \Exp_{i}[ e^{-\hat{Z}_{N}(\qb_{1})}  \, \Ind{d_{N}=1}] \leq \frac{1}{B} \sum_{i=1}^{K} q_{1}^{i} \, \Exp_{i}[ e^{-\hat{\eta}}  \, \Ind{d_{N}=1}]. \nonumber
\end{align}
The last inequality and the fact that  $\hat{\eta}$ is positive imply the second inequality in (\ref{pr00}), whereas (\ref{pr3}) follows from (\ref{correct2}).

Finally, changing the measure $\Pro_{i} \mapsto  \Pro_{0}$, we obtain  
\begin{equation} \label{is3}
\Pro_{i}(d_{M}=0) =  \Exp_{0}[ e^{Z_{M}^{i}}  \, \Ind{d_{M}=0}].
\end{equation}
Since $Z_{M}^{i}= Z_{M}(\qb_{0})- \log q_{0}^{i}-  Y_{M}^{i}(\qb_{0})$ (recall (\ref{Z})), $Z_{M}(\qb_{0})=-\log A- \eta$ on $\{d_{M}=0\}$ (recall (\ref{eta}))
and $Y_{M}^{i}(\qb_{0}) \geq 0$, it follows that $Z_{M}^{i} \leq -\log (A q_{0}^{i}) - \eta$ on $\{d_{M}=0\}$ and, consequently, (\ref{is3}) becomes
$$
\Pro_{i}(d_{M}=0) \leq  \frac{1}{A q_{0}^{i}} \, \Exp_{0}[e^{-\eta} \, \Ind{d_{M}=0}].
$$
Since $\eta$ is positive, we obtain the first inequality in (\ref{pr000}), whereas from (\ref{correct}) we obtain the first inequality in  (\ref{correct3}). The remaining inequalities in (\ref{pr000}) and (\ref{correct3}) can be shown in a similar way. 
\end{proof}

From Theorem \ref{T0}(a) it is clear that when $A, B$ are selected according to 
\begin{equation} \label{choice}
A_{\beta}(\qb_{0})=\frac{1}{\beta \min_{1 \leq i \leq K} q_{0}^{i}} \, , \quad B_{\alpha}(\qb_{1}) =\frac{|\qb_{1}|}{\alpha},
\end{equation}
then $\delta_{\mathrm{mi}}$, $\delta_{\mathrm{gl}} \in \ccab$. Moreover, 
from Theorem \ref{T0}(b) it follows that we can obtain sharper inequalities if we correct for the overshoots selecting $A,B$ as follows  
\begin{equation} \label{B}
A_{\beta}(\qb_{0})=\frac{\gamma_{0}^{1}}{\beta \min_{1 \leq i \leq K} q_{0}^{i}}, 
\quad B_{\alpha}(\qb_{1})= \frac{\sum_{j=1}^{K} q_{1}^{j} \gamma_{j}}{\alpha}.
\end{equation} 
Indeed, with this selection of the thresholds we have  $\Pro_0(d_M=1) = \alpha(1+o(1))$, 
$\Pro_0(d_N=1) \le \alpha(1+o(1))$ and if additionally $r=1$,
$\max_{1 \leq i  \leq K} \Pro_i(d_M=0) \leq \beta(1+o(1))$  and $\max_{1 \leq i  \leq K} \Pro_i(d_N=0) \leq \beta(1+o(1))$.

\subsection{Asymptotic approximations to expected sample sizes}

In order to obtain asymptotic approximations to the expected sample sizes of the MiLRT and the WGLRT, we will make the following assumptions, 
which will be needed for all the results in the  rest of the paper:

\vspace{0.2cm}

\noindent (A1) $\Exp_{i}[(Z_{1}^{i})^{2}]<\infty \quad \text{and} \quad \Exp_{0}[(Z_{1}^{i})^{2}]<\infty, \quad  1 \leq i \leq K$;

\vspace{0.2cm}

\noindent (A2) $\alpha, \beta \rightarrow 0$ so that $|\log \alpha|/|\log \beta| \rightarrow k$, where $k \in (0, \infty)$;

\vspace{0.2cm}

\noindent (A3) For  $T=M$ or $T=N$, $A$ and $B$ are selected so that as $\alpha,\beta\to 0$
\begin{align}
& k_{0} \, \alpha \, (1+o(1)) \leq \Pro_{0}(d_{T}=1) \leq \alpha \, (1+o(1)),   \label{k0} \\ 
& k_{1} \, \beta \, (1+o(1)) \leq  \max_{1 \leq i \leq K}  \Pro_{i}(d_{T}=0) \leq \beta \, (1+o(1)), \label{k1}  
\end{align}
or equivalently,
\begin{align}
&|\log \alpha| +o(1) \leq |\log \Pro_{0}(d_{T}=1)| \leq |\log \alpha| + |\log k_{0}|+o(1),  \label{lk0} \\
& |\log \beta| +o(1) \leq  |\log \max_{1 \leq i \leq K}  \Pro_{i}(d_{T}=0)| \leq |\log \beta| + |\log k_{1}|+o(1), \label{lk1}
\end{align} 
where $k_{0}$, $k_{1} \in (0,1)$ are fixed constants, not necessarily the same for $\delta_{\rm mi}$ and $\delta_{\rm gl}$.
\vspace{0.2cm}

The second moment conditions (A1) on the log-likelihood ratio $Z_{1}^{i}$ are required even for the asymptotic approximations (\ref{ESSi_SPRT})--(\ref{ESSi_SPRT0}) to the performance of the SPRT for testing $f_{0}$ against $f_{i}$. Assumption (A2) concerns the relative rates with which $\alpha$ and $\beta$ go to 0 and requires that $\alpha$ should not go to 0 exponentially faster than $\beta$ and vice-versa. Note, however, that
$\alpha$ can still be much smaller than $\beta$ (or vice versa), a natural requirement in many  applications. 
Assumption (A3) requires that the thresholds  for both the MiLRT and the WGLRT are designed so that the probabilities of the type-I and type-II errors are \textit{asymptotically} bounded by (and at the same time not  much smaller than)  $\alpha$ and $\beta$ respectively. As the following lemma suggests, 
(A3) connects the thresholds $A$ and $B$ with the desired error probabilities $\alpha$ and $\beta$, so that we do not need to impose additional (to (A2)) constraints to the relative rates with which $A$ and $B$ go to infinity.

\begin{lemma}  \label{newlemma}
If {\rm (A3)}  holds, then  $\log B= |\log \alpha|+ O(1)$ and $\log A = |\log \beta|+O(1)$.
\end{lemma}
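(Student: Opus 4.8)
The plan is to pin down each threshold from two sides by sandwiching the two error probabilities. Assumption (A3), in the form \eqref{lk0}--\eqref{lk1}, already forces $|\log\Pro_0(d_T=1)|=|\log\alpha|+O(1)$ and $|\log\max_{i}\Pro_i(d_T=0)|=|\log\beta|+O(1)$, so it suffices to show that these two error probabilities are, up to bounded multiplicative factors, of order $1/B$ and $1/A$ respectively; then the $O(1)$-control of their logarithms transfers to $\log B$ and $\log A$. The exact inequalities of Theorem~\ref{T0}(a) supply one direction, and the change-of-measure identities behind Theorem~\ref{T0}(b) supply the other.

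For the \emph{upper} bounds on the thresholds I would combine \eqref{pr00}--\eqref{pr000} with the lower halves of (A3). From $\Pro_0(d_T=1)\le|\qb_1|/B$ and $\Pro_0(d_T=1)\ge k_0\alpha(1+o(1))$ I get $B\le|\qb_1|/(k_0\alpha)\,(1+o(1))$, hence $\log B\le|\log\alpha|+O(1)$; from $\max_i\Pro_i(d_T=0)\le 1/(A\min_i q_0^i)$ and $\max_i\Pro_i(d_T=0)\ge k_1\beta(1+o(1))$ I get $\log A\le|\log\beta|+O(1)$. These steps are immediate and hold for both tests and all $r$.

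For the matching \emph{lower} bounds on the thresholds I need lower bounds on the error probabilities of the form $\Pro_0(d_T=1)\ge c_1/B$ and $\max_i\Pro_i(d_T=0)\ge c_2/A$ with $c_1,c_2>0$. The $B$-side is essentially \eqref{pr1}: for the MiLRT, \eqref{is11} and the unconditional $\Pro_i$-assertion of Lemma~\ref{L11} give $\Pro_0(d_M=1)=\tfrac1B\big(\sum_i q_1^i\gamma_i\big)(1+o(1))$; for the WGLRT, starting from the equality in \eqref{is2} and using $\Lambda_t(\qb_1)\le K\,\hat\Lambda_t(\qb_1)$ (so that $Z_N(\qb_1)\le\log B+\hat\eta+\log K$ on $\{d_N=1\}$) together with the same lemma gives $\Pro_0(d_N=1)\ge\tfrac{1}{KB}\big(\sum_i q_1^i\gamma_i\big)(1+o(1))$. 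In either case $\Pro_0(d_T=1)\ge c_1/B$, which combined with $\Pro_0(d_T=1)\le\alpha(1+o(1))$ from (A3) yields $\log B\ge|\log\alpha|-O(1)$. For the $A$-side I would pass to the tilted measure $\Pro^{\qb_0}=|\qb_0|^{-1}\sum_i q_0^i\Pro_i$, for which $\Pro^{\qb_0}(d_T=0)\le\max_i\Pro_i(d_T=0)$ and, since $d\Pro^{\qb_0}/d\Pro_0|_{\cF_t}=|\qb_0|^{-1}e^{Z_t(\qb_0)}$,
\[
\Pro^{\qb_0}(d_M=0)=\frac{1}{|\qb_0|}\,\Exp_0\!\left[e^{Z_M(\qb_0)}\Ind{d_M=0}\right]=\frac{1}{|\qb_0|A}\,\Exp_0\!\left[e^{-\eta}\,\Ind{d_M=0}\right],
\]
using $Z_M(\qb_0)=-\log A-\eta$ on $\{d_M=0\}$ (and the analogue with $\hat\eta$ for $N$, where $\Lambda_t(\qb_0)\ge\hat\Lambda_t(\qb_0)$ makes the inequality point the right way). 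Hence $\max_i\Pro_i(d_T=0)\ge(|\qb_0|A)^{-1}\Exp_0[e^{-\eta}\Ind{d_M=0}]$, and with $\max_i\Pro_i(d_T=0)\le\beta(1+o(1))$ this gives $\log A\ge|\log\beta|-O(1)$, \emph{provided} $\liminf\Exp_0[e^{-\eta}\Ind{d_M=0}]>0$.

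The one genuinely delicate point is this last positivity claim, i.e.\ that the lower-boundary overshoot does not escape to $+\infty$ under $\Pro_0$. Since $\Pro_0(d_T=0)=1-\Pro_0(d_T=1)\to1$, it is enough to show that $\eta$ (resp.\ $\hat\eta$) is tight under $\Pro_0$; then $e^{-\eta}\Ind{d_M=0}$ converges in distribution to a strictly positive limit and bounded convergence gives $\liminf\Exp_0[e^{-\eta}\Ind{d_M=0}]>0$. For $r=1$ this is precisely \eqref{correct} in Lemma~\ref{L11}. For $r>1$ the process $-Z(\qb_0)$ (resp.\ $-\hat Z(\qb_0)$) is no longer a perturbed random walk under $\Pro_0$, and I would instead invoke the nonlinear renewal theory of \citet{Zhang-AP88} through the representation \eqref{rip}, which under the second-moment assumption (A1) yields a proper limiting distribution for the overshoot and hence the required tightness. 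I expect verifying the hypotheses of Zhang's theorem for \eqref{rip} to be the main technical obstacle; everything else reduces to the elementary manipulations above.
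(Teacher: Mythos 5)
Your proposal is correct where it can be checked, and it in fact proves more than the paper does: the paper's entire proof of Lemma~\ref{newlemma} is just your first step. Namely, the paper combines the exact bounds \eqref{pr00}--\eqref{pr000} with the upper halves of \eqref{lk0}--\eqref{lk1} (equivalently, the lower halves of \eqref{k0}--\eqref{k1}) to get $\log B\le|\log\alpha|+O(1)$ and $\log A\le|\log\beta|+O(1)$, and then simply asserts the two-sided conclusion; the reverse inequalities are never addressed. This is harmless for the paper because only the one-sided estimates are used downstream: Theorem~\ref{T1} needs $\Exp_i[(M_B^1)^2]=O((\log B)^2)=O(|\log\alpha|^2)$, and Theorem~\ref{C2} needs only that $\log B$ and $\log A$ not exceed $|\log\alpha|$ and $|\log\beta|$ by more than $O(1)$, the opposite comparisons coming from the SPRT lower bounds \eqref{LB1} and \eqref{LB2_1}. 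Your converse bounds are therefore the genuinely new content, and they are sound where complete: the $B$-side argument (the identity \eqref{is11} plus \eqref{correct2} for the MiLRT, and the $\Lambda_t(\qb_1)\le K\hat{\Lambda}_t(\qb_1)$ device applied to the equality in \eqref{is2} for the WGLRT) works, as does the tilted-measure identity on the $A$-side. You have also isolated exactly the right obstruction: the missing ingredient is $\liminf\Exp_0[e^{-\eta}\Ind{d_M=0}]>0$, i.e.\ tightness of the lower-boundary overshoot under $\Pro_0$, which Lemma~\ref{L11} supplies only when $r=1$; for $r>1$ your appeal to \citet{Zhang-AP88} through the representation \eqref{rip} remains an unverified sketch --- and note that the paper invokes Zhang's theory only for the expected-sample-size expansion \eqref{zhang0}, not for any overshoot distribution on the lower boundary, so that step cannot be borrowed from the paper either. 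Two minor caveats: your lower bounds (and Lemma~\ref{L11} itself) presuppose that (A3) forces $A,B\to\infty$, which both you and the paper take for granted; and since the published proof never establishes the reverse inequalities at all, the honest summary is that your proof subsumes the paper's, argues the common part identically, and leaves open only a step (the $r>1$ overshoot tightness) that the paper's own proof leaves open as well.
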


\begin{proof}
From  (\ref{pr00}) we know that $\log B \leq |\log \Pro_{0}(d_{M}=1)|+ |\qb_{1}|$, whereas from (A3), and in particular (\ref{lk0}),
it follows that  $|\log \Pro_{0}(d_{M}=1)| \leq |\log \alpha| + |\log k_{0}| +o(1)$, which proves $\log B = |\log \alpha|+O(1)$. 
The second relationship can be shown in a similar way.
\end{proof}

\begin{theorem}  \label{T1}
If conditions {\rm (A1)--(A3)} hold, then 

\noindent {\rm (a)} for every $1\leq i \leq K$, 
\begin{align}  
I_{i} \, \Exp_{i}[M] &= \log B+ \kappa_{i} - \log q^{i}_{1} +o(1),  \label{highM}
\\
I_{i} \, \Exp_{i}[N] &=  \log B+ \kappa_{i} - \log q^{i}_{1} +o(1); \label{highN}
\end{align}

\noindent {\rm (b)} for $r=1$,
\begin{align} 
I_{0} \, \Exp_{0}[M] &= \log A  + \kappa_{0}^{1} + \log q_{0}^{1} +o(1), \label{hold00}
\\
I_{0} \, \Exp_{0}[N] &= \log A  + \kappa_{0}^{1} + \log q_{0}^{1} +o(1); \label{hold0}
 \end{align}

\noindent {\rm (c)} for $r>1$, 
\begin{align}
I_{0} \, \Exp_{0}[M] &= \log A+   2 \, d_{r} \, \sqrt{\log A }  + O(1), \label{zhang00} 
\\
I_{0} \, \Exp_{0}[N] &= \log A+   2 \, d_{r} \, \sqrt{\log A }   + O(1) , \label{zhang0}
\end{align}
where $d_{r}$ is defined in \eqref{drhr}.
\end{theorem}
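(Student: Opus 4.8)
The plan is to reduce each expected sample size to a one-sided first-passage problem on the event that the test reaches the correct decision, and then to invoke renewal theory for the resulting boundary crossing. Parts (a) and (b) are dual to one another under the change of measure $\Pro_i\leftrightarrow\Pro_0$ and are both governed by nonlinear renewal theory for perturbed random walks, in the form given by \citet{Woodroofe-book82} and \citet{sieg-book}; part (c) is genuinely different and is where the difficulty lies. Throughout, the finiteness of $\Exp_i[M]$, $\Exp_i[N]$, $\Exp_0[M]$, $\Exp_0[N]$ needed to justify the Wald identities below is guaranteed under (A1).

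For part (a) I would work under $\Pro_i$. Because $\Lambda_t^j/\Lambda_t^i\to 0$ $\Pro_i$-a.s.\ for $j\neq i$, the perturbations $Y_t^i(\qb_1)$ and $\hat Y_t^i(\qb_1)$ in \eqref{Y}--\eqref{hY} are slowly changing, so by \eqref{Z}--\eqref{hZ} both $Z_t(\qb_1)$ and $\hat Z_t(\qb_1)$ are perturbed random walks with common underlying walk $Z_t^i$ of positive drift $I_i$. On the correct-decision event $\{d_M=1\}$ one has $M=M_B^1$, and combining \eqref{Z} with the definition \eqref{eta} of $\eta$ gives $Z_M^i=\log B-\log q_1^i+\eta-Y_M^i(\qb_1)$ there. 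Wald's identity $\Exp_i[Z_M^i]=I_i\,\Exp_i[M]$ then produces \eqref{highM}, once one shows $\Exp_i[\eta\,\Ind{d_M=1}]\to\kappa_i$, $\Exp_i[Y_M^i(\qb_1)\,\Ind{d_M=1}]\to 0$, and that the wrong-decision event $\{d_M=0\}$ contributes only $o(1)$. The first limit is the content of the nonlinear renewal theorem (the overshoot converges in law to $\cH_i$, as already used in Lemma \ref{L11}, and under (A1) uniform integrability upgrades this to convergence of means); the second holds because $Y_M^i(\qb_1)\to 0$ a.s.\ and is uniformly integrable; the third follows from Theorem \ref{T0}(a), since $\Pro_i(d_M=0)\le 1/(A q_0^i)$ is of order $1/A$ while the relevant second moments are $O((\log(AB))^2)$, so Cauchy--Schwarz and (A2) give $o(1)$. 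The statement \eqref{highN} for $N$ is obtained identically with $\hat Y_t^i(\qb_1)$ and $\hat\eta$ in place of $Y_t^i(\qb_1)$ and $\eta$. Part (b) is the mirror image under $\Pro_0$ with $r=1$: now $\Lambda_t^j/\Lambda_t^1\to 0$ $\Pro_0$-a.s.\ for $j>1$, so by \eqref{Z} the process $-Z_t(\qb_0)=-Z_t^1-\log q_0^1-Y_t^1(\qb_0)$ is a perturbed random walk with underlying drift $I_0^1=I_0$. On $\{d_M=0\}$ one has $M=M_A^0$ and, from \eqref{eta}, $-Z_M^1=\log A+\log q_0^1+\eta+Y_M^1(\qb_0)$; the same Wald-plus-overshoot computation, now using $\Exp_0[\eta\,\Ind{d_M=0}]\to\kappa_0^1$, yields \eqref{hold00}--\eqref{hold0}.

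Part (c) is the main obstacle, because when $r>1$ the ratios $\Lambda_t^j/\Lambda_t^1$ for the competing minimizers $j\le r$ do not vanish under $\Pro_0$, so $Y_t^1(\qb_0)$ is no longer slowly changing and ordinary nonlinear renewal theory fails. Here I would use the representation \eqref{rip} together with the nonlinear renewal theory for a maximum of random walks of \citet{Zhang-AP88}. Recentering \eqref{rip} by $\mu t$ and discarding the indices $i>r$ (whose centered walks $\ell_t^i-\mu t$ drift to $-\infty$), one obtains $N_A^0=\inf\{t:(\ell_t^0-\mu t)-\max_{1\le i\le r}(\log q_0^i+W_t^i)\ge\log A\}$, where $W_t^i=\ell_t^i-\mu t$ is a centered random walk (because $\mu_i=\mu$ for $i\le r$) whose increments are the coordinates of the vector $W$ in \eqref{W}, hence have covariance $\Sigma$, and $\ell_t^0-\mu t=I_0 t+\tilde U_t$ with $\tilde U_t$ centered. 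A first-passage argument then gives $I_0\,\Exp_0[N_A^0]=\log A+\Exp_0[\max_{1\le i\le r}W_{N_A^0}^i]+O(1)$, since $\Exp_0[\tilde U_{N_A^0}]=0$ by Wald while the overshoot and the constants $\log q_0^i$ are absorbed into $O(1)$. Because $\max_{1\le i\le r}W_t^i/\sqrt t$ converges to $\max_{1\le i\le r}G_i$ with $G\sim N(0,\Sigma)$ and $\Exp_0[N_A^0]\sim\log A/I_0$, the concentration delivered by \citet{Zhang-AP88} and uniform integrability give $\Exp_0[\max_{1\le i\le r}W_{N_A^0}^i]=h_r\sqrt{\log A/I_0}+o(\sqrt{\log A})=2d_r\sqrt{\log A}+o(\sqrt{\log A})$, with $h_r$ and $d_r$ as in \eqref{drhr}; this is \eqref{zhang0}, and the accuracy is only $O(1)$ precisely because the overshoot and constant corrections lie below the $\sqrt{\log A}$ scale. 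The expansion transfers to the MiLRT through the elementary sandwich $\hat Z_t(\qb_0)\le Z_t(\qb_0)\le\hat Z_t(\qb_0)+\log K$, which forces $N_A^0\le M_A^0\le N^0_{AK}$; as $\log(AK)=\log A+O(1)$ leaves $\log A+2d_r\sqrt{\log A}$ unchanged, \eqref{zhang00} follows. In all three parts, the passage from the one-sided times to the full $\Exp_0[M]$ and $\Exp_0[N]$ uses, exactly as in (a), that the wrong-decision event has probability of order $1/B$ and hence contributes only $o(1)$.
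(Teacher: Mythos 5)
Your parts (a) and (b) are correct and are essentially the paper's own argument in different packaging: the paper applies the expected-value form of the nonlinear renewal theorem (Theorem 9.28 of \citet{sieg-book}) to the one-sided times $M_B^1$ and $M_A^0$ and then shows $\Exp_i[M_B^1-M]=o(1)$ via Cauchy--Schwarz together with the second-moment bound $\Exp_i[(M_B^1)^2]=O((\log B)^2)$ from \citet{gut}; you instead apply Wald's identity directly to the two-sided time $M$ and split over $\{d_M=1\}$ and $\{d_M=0\}$. The ingredients are identical --- convergence of the expected overshoot to $\kappa_i$, vanishing of the slowly changing term, and Cauchy--Schwarz on the wrong-decision event --- and upgrading the distributional limits to limits of means is precisely the content of the theorem the paper cites, so nothing is gained or lost there.

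Part (c), however, has a genuine gap in precision. The theorem asserts a remainder of $O(1)$ in \eqref{zhang00}--\eqref{zhang0}, but the mechanism you describe --- weak convergence of $\max_{1\le i\le r}W_t^i/\sqrt{t}$ to $\max_i G_i$ plus uniform integrability --- can only deliver $\Exp_0[\max_i W^i_{N_A^0}]=h_r\sqrt{\log A/I_0}+o(\sqrt{\log A})$, and an $o(\sqrt{\log A})$ error cannot be absorbed into the $O(1)$ term of the statement: what you have actually sketched is the strictly weaker expansion $I_0\,\Exp_0[N]=\log A+2d_r\sqrt{\log A}+o(\sqrt{\log A})$. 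Your closing remark that ``the accuracy is only $O(1)$ precisely because the overshoot and constant corrections lie below the $\sqrt{\log A}$ scale'' has the logic backwards: the overshoot and the constants $\log q_0^i$ are indeed $O(1)$, but the dominant error in your argument is the normal approximation to the expected maximum at the stopping time, and nothing in your sketch controls it at the $O(1)$ level. That control is exactly what the full strength of \citet{Zhang-AP88}, as packaged in Lemma 2.1 of \citet{Drag} and invoked by the paper as a black box, supplies: an expansion of the form $\Exp_0[\max_i W^i_{N_A^0}]=h_r\sqrt{\Exp_0[N_A^0]}+O(1)$, after which solving $I_0 n=\log A+h_r\sqrt{n}+O(1)$ self-consistently gives \eqref{zhang0}. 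So you must either cite that result in this sharper form or supply a rate of convergence (a Berry--Esseen-type input for the expected maximum), not merely weak convergence plus uniform integrability. On the positive side, your sandwich $\hat{Z}_t(\qb_0)\le Z_t(\qb_0)\le \hat{Z}_t(\qb_0)+\log K$, hence $N_A^0\le M_A^0\le N_{AK}^0$, is a clean and correct way to transfer the expansion from the WGLRT to the MiLRT; the paper's proof of (c) only discusses \eqref{zhang0} and is silent on \eqref{zhang00}, so this step is a genuine improvement in completeness --- once the core expansion for $N_A^0$ is established at the claimed $O(1)$ accuracy.
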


\begin{proof}
(a) Asymptotic approximations (\ref{highM}) and (\ref{highN}) can be relatively easily established using nonlinear renewal theory. Specifically, starting from representation (\ref{Z}) and applying the Nonlinear Renewal Theorem (see Theorem~9.28 in \cite{sieg-book}), it can be shown (as in Theorem~2.1 of \cite{feltar12}) that $I_{i} \, \Exp_{i}[M_{B}^{1}]$ is equal to the right-hand side of (\ref{highM}) as $B \rightarrow \infty$. Therefore, to prove  (\ref{highM}) it suffices to show that $\Exp_{i}[M_{B}^{1}-M]=o(1)$  as $A, B \rightarrow \infty$, or equivalently as $\alpha, \beta \rightarrow 0$. To this end, note that
$$
0 \leq M_{B}^{1}-M = [M_{B}^{1}-M_{A}^{0}] \,\Ind{d_{M}=0} \leq M_{B}^{1} \; \Ind{d_{M}=0} .
$$
Applying the  Cauchy--Schwartz inequality,  we obtain
\begin{equation}  \label{qazx}
\Exp_{i}[M_{B}^{1} \; \Ind{d_{M}=0}] \leq  \sqrt{\Exp_{i}[(M_{B}^{1})^{2}] \; \Pro_{i}(d_{M}=0)}.
\end{equation}
From (\ref{Z}) and (\ref{Y}) it is clear that $Z_{t}(\qb_{1}) \geq Z_{t}^{i}+\log q_{1}^{i}$, $ t \in \mathbb{N}$, thus,
$$
M_{B}^{1} \leq  \inf\{t: Z_{t}^{i} \geq \log (B/q_{1}^{i}) \}.
$$
Consequently, from Theorem 8.1 in \cite{gut} it follows that, since (A1) holds, 
$$
(I_{i})^{2} \,  \Exp_{i}[(M^{1}_{B})^{2}] \leq (\log (B/q_{1}^{i}))^{2} (1+o(1)).
$$
From the latter inequality and Lemma~\ref{newlemma} we conclude that 
$$
\Exp_{i}[(M^{1}_{B})^{2}]= O((\log B)^{2})=O(|\log \alpha|^{2}).
$$
Moreover, since (A3)  implies $\Pro_{i}(d_{M}=0)\leq \beta (1+o(1))$,  (\ref{qazx}) becomes 
$$
\Exp_{i}[M_{B}^{1} \; \Ind{d_{M}=0}] =  O(|\log \alpha|^{2} \beta)
$$ and from (A2) we conclude that the upper bound goes to 0. This completes the proof of (\ref{highM}), whereas the proof of (\ref{highN}) is analogous.

(b) From representation (\ref{Z}) and the Nonlinear Renewal Theorem it follows that $I_{0} \, \Exp_{0}[M_{A}^{0}]$ is equal to the right-hand side of (\ref{hold00}) as $A \rightarrow \infty$. Then, similarly to (a), we can show that  $\Exp_{0}[M_{A}^{0}-M]=o(1)$.
The proof of (\ref{hold0}) follows similar steps.  

(c) In order to prove (\ref{zhang0}), we start from representation (\ref{rip}) and apply nonlinear renewal theory of \citet{Zhang-AP88}. As a result,
it can be shown (analogously to Lemma~2.1 of  \citet{Drag}) 
that $I_{0} \, \Exp_{0}[N_{A}^{0}]$  is equal to the right-hand side of (\ref{zhang0}). Thus, it suffices to show that $\Exp_{0}[N_{A}^{0}] =\Exp_0 [N] +o(1)$, which can be done in just the same way as in (a) and (b).
\end{proof}

\begin{remark}
Asymptotic approximation \eqref{zhang0} can be further improved (up to the negligible term $o(1)$), if stronger integrability conditions are postulated on the vector $W$ defined in (\ref{W}). Specifically, if in addition we assume the third moment condition $\Exp_{0}[||W||^{3}]<\infty$ as well as the Cramer-type condition $\lim \sup_{||t|| \rightarrow \infty} \Exp_{0}[e^{j <t,W>}] <1$, where $j$ is the imaginary unit,  $t=(t_{1},  \ldots, t_{r})$ and $<t,W>=\sum_{l=1}^{r} t_{l} W_{l}$, then the following expansion holds
\[
\begin{aligned}
I_{0} \, \Exp_{0}[N] & =  \log A+   2 \, d_{r} \, \sqrt{\log A + d_{r}^{2}}  +  \frac{h_{r}^{2}}{2 I_{0}}  + \kappa_{0}^{1} 
\\
& \quad +  \int_{\mathbb{R}^{r}} \left\{\max_{1 \leq i \leq r} (x_{i}) \; \left[\cP(x)+ \lambda(\qb_{0}) \, \Sigma^{-1}  x'\right]\right\} \;  \phi_{\Sigma}(x) \, dx + o(1),
\end{aligned}
\]
where  $\lambda(\qb_{0})= (\log q_{0}^{1}, \ldots , \log q_{0}^{r})$ and  $\cP$ is a third-degree polynomial whose coefficients depend on the $\Pro_{0}$-cumulants of $W$ (see \citet{bhatrao86}).  This approximation can be derived similarly to  Theorem 3.3 of \citet{dragalin-it00} based on nonlinear renewal theory of \citet{Zhang-AP88}.   
\end{remark}

\begin{corollary}  \label{C3}
Suppose that {\rm (A1)--(A3)} hold with $k_{0}=1$, i.e., $A$ and $B$ are selected so that  
$ \Pro_0(d_M=1) \sim \alpha$  and $\Pro_0(d_N=1) \sim \alpha$. Then, 
\begin{align}
I_{i} \, \Exp_{i}[M] &= |\log \alpha|+ \log \Bigl( \sum_{j=1}^{K} q_{1}^{j} \gamma_{j}  \Bigr)+ \kappa_{i} - \log q^{i}_{1} +o(1), \label{perM}\\
I_{i} \, \Exp_{i}[N] &\leq |\log \alpha|+ \log \Bigl( \sum_{j=1}^{K} q_{1}^{j} \gamma_{j}  \Bigr)+ \kappa_{i} - \log q^{i}_{1} +o(1).  \label{perN}
\end{align}
\end{corollary}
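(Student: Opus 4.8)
The plan is to read off the result by combining the expected-sample-size approximations of Theorem~\ref{T1}(a) with the error-probability asymptotics of Theorem~\ref{T0}(b), the only real content being the elimination of the threshold $\log B$ in favor of $|\log\alpha|$. First I would recall that Theorem~\ref{T1}(a) already supplies, under (A1)--(A3),
$$
I_i\,\Exp_i[M]=\log B+\kappa_i-\log q_1^i+o(1)\quad\text{and}\quad I_i\,\Exp_i[N]=\log B+\kappa_i-\log q_1^i+o(1)
$$
for every $1\le i\le K$. Hence it suffices to express $\log B$ through $|\log\alpha|$ and an overshoot correction. For the MiLRT I would use the exact asymptotic identity \eqref{pr1}, $\Pro_0(d_M=1)=B^{-1}\bigl(\sum_{j=1}^K q_1^j\gamma_j\bigr)(1+o(1))$, together with the hypothesis $\Pro_0(d_M=1)\sim\alpha$ (the $k_0=1$ specialization of \eqref{k0}). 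Equating the two expressions and taking logarithms, so that the multiplicative factors $(1+o(1))$ turn into additive $o(1)$ terms via $\log(1+o(1))=o(1)$, gives
$$
\log B=|\log\alpha|+\log\Bigl(\sum_{j=1}^K q_1^j\gamma_j\Bigr)+o(1).
$$
Substituting this into the formula for $I_i\,\Exp_i[M]$ yields \eqref{perM} directly.

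For the WGLRT I would run the identical argument but starting from the one-sided \emph{inequality} \eqref{pr3}, $\Pro_0(d_N=1)\le B^{-1}\bigl(\sum_{j=1}^K q_1^j\gamma_j\bigr)(1+o(1))$. Combined with $\Pro_0(d_N=1)\sim\alpha$, taking logarithms now produces only the one-sided bound $\log B\le|\log\alpha|+\log\bigl(\sum_{j=1}^K q_1^j\gamma_j\bigr)+o(1)$; inserting it into the approximation for $I_i\,\Exp_i[N]$ produces the inequality \eqref{perN}. This is precisely why the statement records an equality for $M$ but only an upper bound for $N$: the mixture statistic admits a sharp two-sided error estimate, whereas the maximum statistic admits only a one-sided one.

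I do not expect a genuine obstacle, since the corollary is a bookkeeping consequence of the two preceding theorems. The only point deserving a moment of care is the passage through the logarithm, where one must confirm that the fixed positive constant $\sum_{j=1}^K q_1^j\gamma_j$ contributes an additive constant while the relative error factors collapse to $o(1)$ uniformly as $\alpha,\beta\to0$; this is immediate because the $\gamma_j$ and $q_1^j$ do not depend on $\alpha,\beta$ and $|\log\alpha|\to\infty$.
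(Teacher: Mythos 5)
Your proposal is correct and follows essentially the same route as the paper: the paper likewise inverts \eqref{pr1} (equality) and \eqref{pr3} (one-sided bound) to express $\log B$ through $|\log \Pro_0(d_M=1)|$ and $|\log \Pro_0(d_N=1)|$ respectively, invokes \eqref{lk0} with $k_0=1$ to replace these by $|\log\alpha|+o(1)$, and then substitutes into Theorem~\ref{T1}(a). Your closing observation about why the mixture test yields an equality while the GLR test yields only an upper bound matches the structure of the paper's argument exactly.
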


\begin{proof}
From (\ref{pr1})--(\ref{pr3}) it follows that 
\begin{align*}
\log B &= |\log \Pro_0(d_M=1)|+  \log \Bigl( \sum_{j=1}^{K} q_{1}^{j} \gamma_{j}  \Bigr)+o(1), \\
\log B &\leq  |\log \Pro_0(d_N=1)|+  \log \Bigl( \sum_{j=1}^{K} q_{1}^{j} \gamma_{j}  \Bigr) +o(1).
\end{align*}
Moreover, from (\ref{lk0}) and the assumption that $k_{0}=1$ we have  
$$ |\log \Pro_0(d_M=1)|= |\log \alpha| +o(1) \quad \text{and} \quad |\log \Pro_0(d_N=1)|= |\log \alpha| +o(1).$$
From these two relationships and Theorem \ref{T1}(a) we obtain the desired result. 
\end{proof}


\section{Asymptotic Optimality Properties} \label{sec:AM} 

In this section, we establish the asymptotic optimality properties of the MiLRT and the WGLRT.

\subsection{Uniform asymptotic optimality}

First, we show that both tests  minimize the expected sample size  within an $O(1)$ term  (i.e., to second order) under every $\Pro_{i}$, $1 \leq i \leq K$ and
at least to first order under $\Pro_{0}$. 

\begin{theorem}  \label{C2}
Suppose that conditions {\rm (A1)--(A3)} hold and that $A,B$ are selected so that $\delta_{\mathrm{mi}}, \delta_{\mathrm{gl}} \in \ccab$. 

\begin{enumerate}
\item [(a)] For every $1 \leq i \leq K$,
\begin{align} 
\Exp_{i}[M] &= \inf_{\delta \in \ccab} \Exp_{i}[T] + O(1) , \label{sec1}\\
\Exp_{i}[N] &= \inf_{\delta \in \ccab} \Exp_{i}[T] + O(1).  \label{sec2}
\end{align}

\item [(b)] If $r=1$, then  
\begin{align}
\Exp_{0}[M] &= \inf_{\delta \in \ccab} \Exp_{0}[T] + O(1), \label{oM} \\
\Exp_{0}[N] &= \inf_{\delta \in \ccab} \Exp_{0}[T]  + O(1), \label{oN}
\end{align}
whereas if $r>1$,  
\begin{align}
 \Exp_{0}[M] &= \inf_{\delta \in \ccab} \Exp_{0}[T] \; (1+o(1)), \label{Exp0M}\\
 \Exp_{0}[N] &= \inf_{\delta \in \ccab} \Exp_{0}[T] \; (1+o(1)). \label{Exp0N}
 \end{align}
\end{enumerate}
\end{theorem}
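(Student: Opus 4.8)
The plan is to sandwich $\inf_{\delta \in \ccab} \Exp_{i}[T]$ and $\inf_{\delta \in \ccab} \Exp_{0}[T]$ between an upper bound supplied by the specific tests $\delta_{\mathrm{mi}}, \delta_{\mathrm{gl}}$ and a universal lower bound valid for \emph{every} member of $\ccab$. The upper bounds are already in hand: combining Theorem~\ref{T1} with Lemma~\ref{newlemma}, which gives $\log B = |\log\alpha| + O(1)$ and $\log A = |\log\beta| + O(1)$, and recalling that $\kappa_{i}$, $\kappa_{0}^{1}$, $\log q_{1}^{i}$, $\log q_{0}^{1}$ are fixed constants, I obtain $\Exp_{i}[M] = \Exp_{i}[N] = |\log\alpha|/I_{i} + O(1)$ for every $i$, and, when $r=1$, $\Exp_{0}[M] = \Exp_{0}[N] = |\log\beta|/I_{0} + O(1)$. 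When $r>1$, the extra term $2 d_{r}\sqrt{\log A}$ in (\ref{zhang00})--(\ref{zhang0}) is $o(\log A) = o(|\log\beta|)$, so $\Exp_{0}[M] = \Exp_{0}[N] = (|\log\beta|/I_{0})(1+o(1))$.

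For the lower bounds I would reduce each case to a simple-versus-simple subproblem and invoke the classical information inequality for sequential tests (see, e.g., \citet{sieg-book}). Fix $\delta=(T,d_{T}) \in \ccab$. Since $\cA_{0}=\{f_{0}\}$ and $\cA_{1}=\{f_{1},\dots,f_{K}\}$, membership in $\ccab$ forces $\Pro_{0}(d_{T}=1)\leq\alpha$ and $\Pro_{i}(d_{T}=0)\leq\beta$ for every $i$, so $\delta$ is a valid test of $f_{0}$ against $f_{i}$. The information inequality then gives $I_{i}\,\Exp_{i}[T] \geq (1-\beta)\log\frac{1-\beta}{\alpha} + \beta\log\frac{\beta}{1-\alpha}$ and $I_{0}^{i}\,\Exp_{0}[T] \geq (1-\alpha)\log\frac{1-\alpha}{\beta} + \alpha\log\frac{\alpha}{1-\beta}$. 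Under (A2) the right-hand sides reduce to $|\log\alpha|+o(1)$ and $|\log\beta|+o(1)$, since the correction terms are of order $\beta|\log\alpha|$, $\alpha|\log\beta|$, $\alpha\log\alpha$ and $\beta\log\beta$, each tending to $0$. Selecting in the second bound an index with $I_{0}^{i}=I_{0}$ yields the sharpest estimate, so for every $\delta\in\ccab$,
\[
\Exp_{i}[T] \geq \frac{|\log\alpha|}{I_{i}} + o(1), \qquad \Exp_{0}[T] \geq \frac{|\log\beta|}{I_{0}} + o(1),
\]
and the same lower bounds hold after taking the infimum over $\ccab$.

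To conclude, I combine the two ingredients. Because $\delta_{\mathrm{mi}}, \delta_{\mathrm{gl}}\in\ccab$, we trivially have $\inf_{\delta\in\ccab}\Exp_{i}[T] \leq \Exp_{i}[M]$; together with the lower bound $\inf_{\delta\in\ccab}\Exp_{i}[T] \geq |\log\alpha|/I_{i}+o(1)$ and the upper bound $\Exp_{i}[M]=|\log\alpha|/I_{i}+O(1)$, this confines $\Exp_{i}[M]-\inf_{\delta\in\ccab}\Exp_{i}[T]$ to $[0,O(1)]$, proving (\ref{sec1}); the arguments for (\ref{sec2}) and, when $r=1$, for (\ref{oM})--(\ref{oN}) are identical. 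When $r>1$, the same sandwiching with first-order quantities gives both $\inf_{\delta\in\ccab}\Exp_{0}[T]$ and $\Exp_{0}[M]$ equal to $(|\log\beta|/I_{0})(1+o(1))$, whose ratio is $1+o(1)$, establishing (\ref{Exp0M})--(\ref{Exp0N}).

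The delicate point is to match the leading constants and to claim the correct strength of optimality in each regime. The simple-alternative information inequality is only first-order: it recovers the leading terms $|\log\alpha|/I_{i}$ and $|\log\beta|/I_{0}$ but not the $O(1)$ or $\sqrt{\log A}$ corrections. For the $\Pro_{i}$ cases and for $\Pro_{0}$ with $r=1$ this suffices, because Theorem~\ref{T1} places the test's expected sample size within $O(1)$ of the leading term, so a first-order-tight lower bound automatically upgrades to $O(1)$ (second-order) optimality. When $r>1$, however, the upper bound carries the unbounded $2 d_{r}\sqrt{\log A}$ correction, which no single-alternative lower bound can reproduce; hence only first-order optimality $(1+o(1))$ is available, precisely as the statement asserts. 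I would therefore take care not to over-claim an $O(1)$ bound in the $r>1$ regime.
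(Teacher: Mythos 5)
Your proof is correct, and its skeleton---sandwiching $\inf_{\delta\in\ccab}\Exp_{i}[T]$ between the upper bound supplied by Theorem \ref{T1} together with Lemma \ref{newlemma} and a first-order lower bound obtained by reducing to the simple problem of testing $f_{0}$ versus $f_{i}$---is exactly the paper's. The one genuine difference is the source of the lower bound. The paper simply invokes its stated SPRT expansions (\ref{ESSi_SPRT})--(\ref{ESSi_SPRT0}): since any $\delta\in\ccab$ tests $f_{0}$ versus $f_{i}$ with error probabilities at most $\alpha,\beta$, the Wald--Wolfowitz optimality of the SPRT $\delta^{i}$ gives $I_{i}\inf_{\delta\in\ccab}\Exp_{i}[T]\ge |\log\alpha|+O(1)$ and $\inf_{\delta\in\ccab}\Exp_{0}[T]\ge |\log\beta|/I_{0}^{i}+O(1)$, the latter optimized by taking $I_{0}^{i}=I_{0}$. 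You instead derive the same first-order bounds from Wald's information inequality and verify by hand that its correction terms ($\beta|\log\alpha|$, $\alpha|\log\beta|$, $\alpha\log\alpha$, $\beta\log\beta$) vanish under (A2). Your route is more elementary and self-contained---it requires neither the exact optimality theorem for the SPRT nor the renewal-theoretic expansions of $\Exp_{i}[S^{i}]$---at the cost of bookkeeping that the paper compresses into a citation of results it has already stated; both yield identical conclusions, including the correctly weakened $(1+o(1))$ claim for $r>1$, where the unbounded $2d_{r}\sqrt{\log A}$ term precludes an $O(1)$ statement. One small point you should make explicit: the information inequality involves the test's \emph{actual} error probabilities, so substituting the tolerances $\alpha,\beta$ requires noting that the binary Kullback--Leibler bound is monotone in both arguments when $\alpha+\beta<1$.
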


\begin{proof}
(a)  From (\ref{ESSi_SPRT}) it  is clear that 
\begin{align} 
 I_{i} \, \inf_{\delta \in \ccab} \Exp_{i}[T] &\geq |\log \alpha| + O(1) , \label{LB1}
\end{align}
whereas from Theorem~\ref{T1}(a) and Lemma \ref{newlemma} it follows that 
$$I_{i} \, \Exp_{i}[M] =\log B+O(1)= |\log \alpha |+ O(1),$$
which proves (\ref{sec1}). The proof of (\ref{sec2}) is similar.

(b) From (\ref{ESSi_SPRT0}) it is clear that for every $ 1\leq i \leq K$  we have 
\begin{align} 
\inf_{\delta  \in \ccab} \Exp_{0}[T] & \geq \frac{|\log \beta|}{I_{0}^{i}} + O(1), \label{LB2}
\end{align}
thus, recalling from (\ref{order}) that $I_{0}=\min_{1 \leq i \leq K} I_{0}^{i}$, we obtain
\begin{align} 
\inf_{\delta  \in \ccab} \Exp_{0}[T] & \geq \frac{|\log \beta|}{I_{0}} + O(1). \label{LB2_1}
\end{align}
But from  Theorem~\ref{T1}(b) and Lemma~\ref{newlemma} it follows that 
\begin{equation} \label{messages}
I_{0} \, \Exp_{0}[M] =\left\{
\begin{array}{cl}
\log A+O(1)= |\log \beta |+ O(1), \quad &\mbox{if}~ r=1,  \\
\log A \,  (1+o(1))= |\log \beta | \, (1+o(1)), \quad &\mbox{if}~ r>1,
\end{array}
\right.
\end{equation}
which implies (\ref{oM}) and (\ref{Exp0M}). The proofs of (\ref{oN}) and (\ref{Exp0N}) are similar.
\end{proof}

\subsection{Almost optimality}

In what follows, we denote by  $\delta^{*}_{\rm mi}(\pb)=(M^*(\pb),d_{M^*(\pb)})$ and $\delta^{*}_{\rm gl}(\pb)=(N^*(\pb),d_{N*(\pb)})$ the MiLRT and the WGLRT 
with weights  given by (\ref{choose}), i.e.
\begin{equation} \label{choose2}
q_{1}^{i}= \frac{p_{i}}{\cL_{i}} \quad \text{and} \quad  q_{0}^{i}=p_{i} \, \cL_{i} , \quad 1  \leq i \leq K,
\end{equation}
where $\pb=(p_{1},  \ldots, p_{K})$, $p_{i}>0$ for every $1 \leq i \leq K$ and $\sum_{i=1}^{K} p_{i}=1$.  
Our goal is to show that $\delta^{*}_{\rm mi}(\pb)$ and $\delta^{*}_{\rm gl}(\pb)$ attain $\inf_{\delta \in \ccab} \Exp^{\pb}[T]$
asymptotically within an $o(1)$ term, where $\Exp^{\pb}$ is expectation with respect to the weighted probability measure $\Pro^{\pb}=\sum_{i=1}^{K} p_{i}  \, \Pro_{i}$. Before doing so, note that Corollary \ref{C3} implies that if $B$ is selected so that
$\Pro_0(d_{M^{*}(\pb)}=1) \sim \alpha$ and $\Pro_0(d_{N^{*}(\pb)}=1) \sim \alpha$,  then
\begin{align}  
 \Exp_{i}[M^{*}(\mathbf{p})] &= \frac{1}{I_{i}} \Bigl[|\log \alpha|+  \kappa_{i} + \log \gamma_{i} +  C_{i}(\mathbf{p}) \Bigr] +o(1),  \label{perM222} \\
 \Exp_{i}[N^{*}(\mathbf{p})] &\leq \frac{1}{I_{i}} \Bigl[|\log \alpha|+  \kappa_{i} + \log \gamma_{i} +  C_{i}(\mathbf{p}) \Bigr] +o(1), \label{perN222}
\end{align} 
where we have used the fact that $\cL_{i}= \gamma_{i} I_{i}$, $1\leq i\leq K$ and we have introduced the following notation
\begin{equation} \label{C}
C_{i}(\mathbf{p})=\log \Bigl( \sum_{j=1}^{K} \frac{p_{j}}{I_{j}} \Bigr) - \log \frac{p_{i}}{I_{i}}, \quad 1 \leq i \leq K.
\end{equation}

\begin{theorem} \label{T2} 
Suppose that  conditions {\rm (A1)--(A3)} hold with $k=1$, i.e., $\alpha,\beta \rightarrow 0$ so that $|\log \alpha| \sim |\log \beta|$. Then
\begin{align} \label{t}
\inf_{\delta \in \ccab} \Exp^{\pb}[T] &=  \sum_{i=1}^{K} \frac{p_{i}}{I_{i}} \Bigl[|\log \alpha|+  \kappa_{i} + \log \gamma_{i} +  C_{i}(\mathbf{p}) \Bigr] +o(1). 
\end{align} 
Moreover, if $A,B$ are selected so that  $\delta^{*}_{\rm mi}(\pb)$  and $\delta^{*}_{\rm gl}(\pb)$ belong to $\ccab$ and $k_{0}=1$, i.e.,
$\Pro_0(d_{M^{*}(\pb)}=1) \sim \alpha$ and $\Pro_0(d_{N^{*}(\pb)}=1) \sim \alpha$, then 
 \begin{align*}
\inf_{\delta \in \ccab} \Exp^{\pb}[T] &=  \Exp^{\pb}[M^{*}(\pb)] +o(1), \\ 
\inf_{\delta \in \ccab} \Exp^{\pb}[T] &=  \Exp^{\pb}[N^{*}(\pb)] +o(1).
\end{align*} 
\end{theorem}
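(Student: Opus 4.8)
The plan is to observe that everything reduces to the lower half of \eqref{t}, namely $\inf_{\delta\in\ccab}\Exp^{\pb}[T]\ge\sum_{i=1}^{K}(p_i/I_i)[|\log\alpha|+\kappa_i+\log\gamma_i+C_i(\pb)]+o(1)$. The matching upper bound, and hence \eqref{t} itself, is supplied by the MiLRT: choosing $A,B$ as in \eqref{B} puts $\delta^*_{\rm mi}(\pb)$ in $\ccab$ with $k_0=1$, so weighting \eqref{perM222} by $p_i$ and summing gives $\Exp^{\pb}[M^*(\pb)]=\sum_i(p_i/I_i)[|\log\alpha|+\kappa_i+\log\gamma_i+C_i(\pb)]+o(1)$, which bounds the infimum from above. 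The two achievability statements are then immediate corollaries of \eqref{t}: since $\delta^*_{\rm mi}(\pb)\in\ccab$ we have $\Exp^{\pb}[M^*(\pb)]\ge\inf_{\delta\in\ccab}\Exp^{\pb}[T]$, and comparison with the displayed value forces $\Exp^{\pb}[M^*(\pb)]=\inf_{\delta\in\ccab}\Exp^{\pb}[T]+o(1)$; for the WGLRT the same computation using the inequality \eqref{perN222} gives $\Exp^{\pb}[N^*(\pb)]\le\inf_{\delta\in\ccab}\Exp^{\pb}[T]+o(1)$, and $\delta^*_{\rm gl}(\pb)\in\ccab$ supplies the reverse inequality.

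For the lower bound I would combine the exact Wald--Wolfowitz optimality of the SPRT with an allocation of the global type-I budget across the $K$ alternatives, which is also what produces the term $C_i(\pb)$. Fix $\delta=(T,d_T)\in\ccab$ and partition the sample space into events $\Omega_1,\dots,\Omega_K$ on which, respectively, each alternative ``dominates'' at time $T$ (for instance $\Omega_i=\{Z_T^i=\max_{1\le j\le K}Z_T^j\}$), so that $\Pro_i(\Omega_i^c)\to0$ by the SLLN. Set $\alpha_i:=\Pro_0(d_T=1,\Omega_i)$, so that $\sum_i\alpha_i=\Pro_0(d_T=1)\le\alpha$. For each $i$, modify $\delta$ into a test $\delta_i$ for the simple-versus-simple problem $f_0$ versus $f_i$ that keeps the same stopping time $T$ but accepts $\Hyp_1$ only on $\{d_T=1\}\cap\Omega_i$; then $\delta_i$ has type-I error $\alpha_i$ and type-II error $\le\beta+\Pro_i(\Omega_i^c)\to0$, while $\Exp_i[T]$ is unchanged. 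Wald--Wolfowitz optimality together with \eqref{ESSi_SPRT}, whose leading and second-order terms depend only on the type-I level, then yields $\Exp_i[T]\ge I_i^{-1}(|\log\alpha_i|+\kappa_i+\log\gamma_i)+o(1)$. Weighting by $p_i$ and summing, $\Exp^{\pb}[T]\ge\sum_i(p_i/I_i)(|\log\alpha_i|+\kappa_i+\log\gamma_i)+o(1)$; since the realized $\alpha_i$ satisfy $\sum_i\alpha_i\le\alpha$, minimizing the convex functional $\sum_i(p_i/I_i)|\log\alpha_i|$ over this simplex gives the optimum at $\alpha_i=\alpha\,(p_i/I_i)/\sum_j(p_j/I_j)$, for which $|\log\alpha_i|=|\log\alpha|+C_i(\pb)$ by \eqref{C}. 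This reproduces exactly the right-hand side of \eqref{t} and, being uniform in $\delta$, bounds the infimum.

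The main obstacle is to push this scheme through at the $o(1)$ precision required, rather than only to first order. The delicate points are: constructing the partition so that $\Pro_i(\Omega_i^c)\to0$ fast enough that the validity hypothesis $\alpha_i|\log\beta_i'|+\beta_i'|\log\alpha_i|\to0$ of \eqref{ESSi_SPRT} survives for $\beta_i'=\beta+\Pro_i(\Omega_i^c)$; and ensuring the $o(1)$ remainder in the per-alternative bound is uniform over the realized allocations $\{\alpha_i\}$, including degenerate cases where some $\alpha_i$ is far smaller than $\alpha$ (there the corresponding term is large and only strengthens the bound, but the SPRT approximation must still be controlled, for example by a truncation argument). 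The hypothesis $k=1$, that is $|\log\alpha|\sim|\log\beta|$, enters precisely here: it keeps the type-II constraints from contributing terms of the same order as $|\log\alpha|$, so that the lower bound remains governed by the type-I allocation and matches the $|\log\alpha|$-driven right-hand side of \eqref{t}. Once this uniform second-order lower bound is established, the reductions of the first paragraph complete the proof.
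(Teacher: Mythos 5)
Your first paragraph is sound and coincides with the paper's own reduction: the upper bound in \eqref{t} is supplied by Corollary \ref{C3} (i.e., by \eqref{perM222}--\eqref{perN222}), and once \eqref{t} is in hand the two achievability claims follow exactly as you say. (Minor imprecision: the thresholds \eqref{B} put $\delta^{*}_{\rm mi}(\pb)$ in $\ccab$ only up to $(1+o(1))$ factors, and the type-II part only when $r=1$; but since the theorem simply hypothesizes thresholds for which $\delta^{*}_{\rm mi}(\pb), \delta^{*}_{\rm gl}(\pb) \in \ccab$ and $k_{0}=1$, nothing is lost there.)

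The genuine gap is in the lower bound for \eqref{t}, which is the heart of the theorem, and it is not merely one of the ``delicate points'' you defer: it is fatal at the $o(1)$ precision required. Your surrogate test $\delta_{i}$ has type-II error $\beta_{i}' \leq \beta + \Pro_{i}(\Omega_{i}^{c})$, and in the Wald-identity expansion behind \eqref{ESSi_SPRT} the type-II error enters through a defect of order $\beta_{i}'\bigl(|\log \alpha_{i}| + |\log \beta_{i}'|\bigr)/I_{i}$ (the sample-size saving from exiting at the wrong boundary), which must be $o(1)$ for the benchmark to retain third-order accuracy. This forces $\Pro_{i}(\Omega_{i}^{c}) = o(1/|\log\alpha|)$ \emph{uniformly over} $\delta \in \ccab$, and under (A1) that rate is not available: the uniform lower bound on $T$ over $\ccab$ that makes $\Pro_{i}(\Omega_{i}^{c})$ small at all comes from a change of measure plus a Kolmogorov--Chebyshev maximal inequality, which yields only $O(1/|\log\alpha|)$ (and for the cross walks $Z_{t}^{i}-Z_{t}^{j}$, (A1) does not even guarantee finite second moments under $\Pro_{i}$, so the identification error $\Pro_{i}(\exists j \neq i: Z_{T}^{j} \geq Z_{T}^{i})$ is generically of order $1/|\log\alpha|$ as well, for any choice of partition). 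Consequently your per-alternative bound degrades to $\Exp_{i}[T] \geq I_{i}^{-1}(|\log\alpha_{i}| + \kappa_{i} + \log\gamma_{i}) - O(1)$: the allocation scheme (whose optimization step producing $C_{i}(\pb)$ in \eqref{C} is correct) recovers the second-order optimality of Theorem \ref{C2}, but not \eqref{t}. This is exactly why the paper takes a different route: it formulates a Bayes problem with observation cost $c$ and prior $\Prop = \pi\,\Pro_{0} + (1-\pi)\,\Pro^{\pb}$, invokes the almost-Bayes property \eqref{L4} of $\delta^{*}_{\rm mi,c}(\pb)$ with the symmetric thresholds \eqref{qq} due to \citet{Lorden-AS77}, compares an arbitrary $\delta \in \ccab^{\pb}$ with $M^{*}_{c}$ through the hybrid test $T_{\epsilon c} = \min\{M^{*}_{\epsilon c}, T\}$, and then exploits the freedom to choose $\pi = \epsilon/(1+\epsilon)$ so that the residual term $\epsilon\,(\Exp_{0}[M^{*}_{\epsilon c}] - \Exp_{0}[M^{*}_{c}]) = O(\epsilon|\log\epsilon|)$ is arbitrarily small. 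That Bayesian comparison is the missing idea; without it, or a genuinely new mechanism for controlling the leakage terms, your argument cannot reach $o(1)$ accuracy.
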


In order to prove this theorem, we formulate our sequential testing problem as a Bayesian sequential decision problem with  $K+1$ states, $\Hyp_{0}: f=f_{0}$ and $\Hyp_{1}^{i}: f=f_{i}$, $1 \leq i \leq K$
and  two possible actions upon stopping, either accepting $\Hyp_{0}$ or $\Hyp_{1}=\cup_{i} H_{1}^{i}$. Moreover, we denote by $c$ the sampling cost per observation and  by $w_{1}$ (resp. $w_{0}$) the loss associated with accepting $\Hyp_{0}$ (resp. $\Hyp_{1}$) when the correct hypothesis is $\Hyp_{1}$ (resp. $\Hyp_{0}$).  We also define the probability measure $\Prop = \pi \, \Pro_{0} + (1-\pi) \, \Pro^{\pb}$, which means that $\pi=\Prop(\Hyp_{0})$ is the prior probability of $\Hyp_{0}$ and $p_{i}=\Prop(\Hyp_{1}^{i}|\Hyp_{1})$ is  the prior probability of $f=f_{i}$ given that $\Hyp_{1}$ is correct.

 The integrated risk of a sequential  test $\delta=(T,d_{T})$ is defined as the sum $\cR(\delta)=\cR_{c}(T)+ \cR_{s}(d_{T})$, where 
$\cR_{c}(T)$ is the integrated risk due to sampling and $\cR_{s}(d_{T})$  is the integrated risk due to a wrong decision upon stopping, i.e., 
 \begin{align*} 
 \cR_{c}(T) &= c  \, \Expop[T]=  c \Bigl[ \pi \, \Exp_{0}[T] + \, (1-\pi) \,  \Exp^{\pb}[T] \Bigr], \\
\cR_{s}(d_{T}) &=  \Expop[ w_{0} \,\Ind{d_{T}=1} | \Hyp_{0}] +  \Expop[w_{1} \, \Ind{d_{T}=0}  | \Hyp_{1}] \\
 &= \pi \, w_{0} \, \Pro_{0}(d_{T}=1) +  (1-\pi) \, w_{1} \,  \Pro^{\pb}(d_{T}=0) .
\end{align*} 
The Bayesian sequential decision problem  is to find an optimal (\textit{Bayes}) sequential test that attains the  \textit{Bayes risk}, $\cR^{*}=\inf_{\delta} \cR(\delta)$.  It is well known that the solution to this problem does not have a simple structure (see, e.g., \cite{ChowRobbinsSiegmund-book71}). However, from the seminal work of \citet{Lorden-AS77}  on  finite-state sequential decision making it follows that $\delta_{\rm mi}^{*}(\mathbf{p})$ and  $\delta^{*}_{\rm gl}(\mathbf{p})$ are \textit{almost Bayes} when the thresholds $A$ and $B$ are chosen as 
\begin{equation} \label{qq}
A_{c}= \frac{1-\pi}{\pi} \frac{w_{1}}{c} \quad \text{and} \quad B_{c}= \frac{\pi}{1-\pi} \frac{w_{0}}{c} .
\end{equation} 
More specifically, denote by  $\delta^{*}_{\rm mi,c}(\pb) =(M_{c}^{*}(\pb), d_{M_{c}^{*}(\pb)})$ and  
$\delta^{*}_{\rm gl,c}(\pb)=(N_{c}^{*}(\pb), d_{N_{c}^{*}(\pb)})$  the sequential tests $\delta^{*}_{\rm mi}(\pb)$ and $\delta^{*}_{\rm gl}(\pb)$ when the thresholds are given by  $A_{c}$ and $B_{c}$. Under the integrability condition (A1), it follows from \citet{Lorden-AS77} that 
\begin{equation} \label{L4}
\cR(\delta^{*}_{\rm mi,c}(\pb))-\cR^{*} =o(c) \quad \text{and} \quad
\cR(\delta^{*}_{\rm gl,c}(\pb)) -\cR^{*} =o(c).
\end{equation}
The proof of Theorem \ref{T2} relies on this third-order Bayesian asymptotic optimality property, which requires symmetric thresholds (\ref{qq}) and is the reason why
we assumed in Theorem \ref{T2} that error probabilities go to 0 with the same rate.

\begin{proof}
In order to lighten the notation, we omit the dependence on the prior distribution $\pb$ and write simply
$\delta^{*}_{\rm mi}=(M^{*},d_{M^{*}})$ and $\delta^{*}_{\rm mi,c}=(M^{*}_{c},d_{M^{*}_{c}})$  instead of $\delta^{*}_{\rm mi}(\pb)=(M^{*}(\pb),d_{M^{*}(\pb)})$
and $\delta^{*}_{\rm mi,c}(\pb)=(M_{c}^{*}(\pb),d_{M_{c}^{*}(\pb)})$ (and similarly for the WGLRT). 

From Corollary \ref{C3} it is clear that the right-hand side in (\ref{t}) is attained by $\delta^{*}_{\rm mi }$ and $\delta^{*}_{\rm gl}$ when their thresholds are selected so that $\Pro_0(d_{M^{*}}=1) \sim \alpha$ and $\Pro_0(d_{N^{*}}=1) \sim \alpha$. If additionally  $\delta^{*}_{\rm mi}, \delta^{*}_{\rm gl} \in \ccab$, then $\inf_{\delta \in \ccab} \Exp^{\pb}[T]$ is attained by these two tests to  within an $o(1)$ term. Thus, it suffices to establish  (\ref{t}). 

Consider the class of sequential tests
$$\ccab^{\pb}=\{\delta: \Pro_{0}(d_{T}=1) \leq \alpha \quad \text{and} \quad \Pro^{\pb}(d_{T}=0) \leq \beta\}.$$
Since $\ccab\subset \ccab^{\pb}$, we have $\inf_{\delta \in \ccab} \Exp^{\pb}[T] \geq \inf_{\delta \in \ccab^{\pb}} \Exp^{\pb}[T]$. Thus, it suffices to show that
\begin{equation} \label{ok}
\inf_{\delta \in \ccab^{\pb}} \Exp^{\pb}[T] = \sum_{i=1}^{K} \frac{p_{i}}{I_{i}} \Bigl[|\log \alpha|+  \kappa_{i} + \log \gamma_{i} +  C_{i}(\mathbf{p}) \Bigr] +o(1).
\end{equation}
Consider now the sequential test $\delta^{*}_{\rm mi,c}=(M_c^*, d_{M_c^*})$ with thresholds $A_{c}$ and $B_{c}$ selected so that $\Pro_{0}(d_{M_{c}^{*}}=1) =\alpha$ and $\Pro^{\pb}(d_{M_{c}^{*}}=0) = \beta$. From Corollary \ref{C3} it is clear that   $\Exp^{\pb}[M_{c}^{*}]$ is equal to the right-hand side in (\ref{ok}) as $c \rightarrow 0$, which means that it  suffices to show that
$$
\inf_{\delta \in \ccab^{\pb}} \Exp^{\pb}[T]= \Exp^{\pb}[M^{*}_{c}]+o(1),
$$
where $o(1)$ is an asymptotically negligible term as $c  \rightarrow 0$. 
More specifically, if  $\delta$ is an arbitrary sequential test in $\ccab^{\pb}$, we need to show that, for sufficiently small $c$,
$| \Exp^{\pb}[T]- \Exp^{\pb}[M^{*}_{c}]|$ is bounded above by an arbitrarily small, but fixed number. 

First of all, we observe that
\begin{align} \label{ers}
\cR_{s}(d_{T}) &=  \pi \, w_{0} \, \Pro_{0}(d_{T}=1) + (1-\pi) \, w_{1} \, \Pro^{\pb}(d_{T}=0) \nonumber\\
&\leq  \pi \, w_{0} \, \alpha + (1-\pi) \, w_{1} \, \beta =  \cR_{s}(d_{M_{c}^{*}}),
\end{align} 
where the inequality is due to $\delta \in \ccab^{\pb}$ and the second equality follows from the 
assumption that  $\Pro_{0}(d_{M_{c}^{*}}=1)= \alpha$ and $\Pro^{\pb}(M_{c}^{*}=0)=\beta$.

From  (\ref{pr00})--(\ref{pr000}) and the definition of $A_{c}$ and $B_{c}$ in (\ref{qq}) we have  
\begin{align} \label{beg}
\cR_{s}(d_{M_{c}^{*}}) &= \pi \, w_{0} \, \Pro_{0}(d_{M_{c}^{*}}=1) + (1-\pi) \, w_{1} \, \Pro^{\pb}(d_{M_{c}^{*}}=0) \nonumber\\
&\leq  \pi \, w_{0} \, \frac{|\qb_{1}|}{B_{c}} + (1-\pi) \, w_{1} \,\sum_{i=1}^{K} p_{i}  \frac{1}{A_{c} q_{0}^{i}} \nonumber\\
&\leq  |\qb_{1}| (1-\pi) c  + \sum_{i=1}^{K} p_{i} \frac{\pi c}{q_{0}^{i}} \leq (Q-1) c,
\end{align}
where $Q >1$ is some constant that does not depend on $c$ or $\pi$.

Fix $\epsilon>0$ and introduce the following sequential test
$$
T_{\epsilon c}= \min \{M^{*}_{\epsilon c} ,  T \} \;, \quad d_{T_{\epsilon c}}= d_{T} \Ind {T\leq M^{*}_{\epsilon c}}+ d_{M^{*}_{\epsilon c}} \Ind {T> M^{*}_{\epsilon c}}.
$$
Obviously, 
\begin{align} \label{ers2}
\cR_{s}(d_{T_{\epsilon c}}) \leq \cR_{s}(d_{T}) + \cR_{s}(d_{M^{*}_{\epsilon c}}) & \leq \cR_{s}(d_{M^{*}_{c}}) + \cR_{s}(d_{M^{*}_{\epsilon c}}) \nonumber\\
                                                                                 & \leq \cR_{s}(d_{M^{*}_{c}}) +(Q-1) c\, \epsilon ,
\end{align}
where the first inequality is due to (\ref{ers}) and the second one is due to   (\ref{beg}).

Since $M^{*}_{c}$ is almost Bayes (recall \eqref{L4}), for all sufficiently small $c$
\begin{equation} \label{opt}
\cR_{c}({M^{*}_{c}}) + \cR_{s}(d_{M^{*}_{c}}) \leq  \cR_{c}(T_{\epsilon c})+ \cR_{s}(d_{T_{\epsilon c}}) +  c \, \epsilon.
\end{equation}
Then, from (\ref{ers2})  we obtain $\cR_{c}(M^{*}_{c})  \leq  \cR_{c}(T_{\epsilon c})+ Q \, c \, \epsilon$, and consequently,
\begin{align} \label{set}
 \pi \,  \Exp_{0}[M^{*}_{c}] +  (1-\pi) \, \Exp^{\pb}[M^{*}_{c}] &\leq \pi \,  \Exp_{0}[T_{\epsilon c}] + (1-\pi) \;  \Exp^{\pb}[T_{\epsilon c}] + 
Q \,   \epsilon  \nonumber\\
 &\leq \pi \,  \Exp_{0}[M^{*}_{\epsilon c}] + (1-\pi) \;  \Exp^{\pb}[T]  +  Q \,   \epsilon,
\end{align}
where the second inequality follows from the definition of $T_{\epsilon c}$. Rearranging terms, we obtain from (\ref{set}) that
\begin{align} \label{set2}
 \Exp^{\pb}[M^{*}_{c}]  -  \Exp^{\pb}[T] \leq  \frac{\pi}{1-\pi} \, \Bigl(\Exp_{0}[M^{*}_{\epsilon c}]- \Exp_{0}[M^{*}_{c}] \Bigr) + \frac{Q \, \epsilon}{1-\pi}.
\end{align}
Since the last inequality holds for any $\pi \in (0,1)$, we can set $\pi=\epsilon /(1+\epsilon)$, which implies 
$B_{c}= \epsilon w_{0}/c$ and $A_{c}= w_{1} / (\epsilon \, c)$, whereas  (\ref{set2}) becomes
\begin{equation} \label{set3}
 \Exp^{\pb}[M^{*}_{c}]  -  \Exp^{\pb}[T] \leq  \epsilon \, (\Exp_{0}[M^{*}_{\epsilon c}]- \Exp_{0}[M^{*}_{c}]) + Q \, \epsilon (1+\epsilon).
 \end{equation}
But from (\ref{hold00}) and (\ref{zhang0}) it follows that as $c \rightarrow 0$
$$I_{0} \, (\Exp_{0}[M^{*}_{\epsilon c}]- \Exp_{0}[M^{*}_{c}]) = O(\log A_{\epsilon c}- \log A_{c})$$
and from (\ref{qq}) we have  $\log A_{\epsilon c}- \log A_{c}$=$|\log \epsilon|+O(1)$ as $c \rightarrow 0$, which completes the proof. 
 \end{proof}

\begin{remark}
With a similar argument as the one used in the proof of Theorem \ref{T2} it can be shown that if 
$\Pro_{0}(d_{M^{*}(\pb)}=1) =\alpha$ and $\Pro^{\pb}(d_{M^{*}(\pb)}=0) = \beta$, then
$$
\inf_{\delta \in \ccab} \Exp_{0}[T]  \geq \inf_{\delta \in \ccab^{\pb}} \Exp_{0}[T]= \Exp_{0}[M^{*}(\pb)]+o(1)
$$
and similarly for $\delta_{\rm gl}$. However, the right-hand side in this asymptotic lower bound is generally not attained by $\delta^{*}_{\rm mi}(\pb)$ or  $\delta^{*}_{\rm gl}(\pb)$  when their  thresholds are selected so that $\delta_{\rm mi}$, $\delta_{\rm gl}$ $\in \ccab$.
\end{remark}

\begin{remark}
While we have no rigorous proof, we strongly believe that the assertions of Theorem~\ref{T2} (as well as of Theorem~\ref{T3} below) hold true in the more general case where $\alpha$ and $\beta$ approach zero in such a way that the ratio $\log \alpha / \log \beta$ is bounded away from zero and infinity, which allows one to cover the asymptotically asymmetric case as well.
\end{remark}

\subsection{Almost minimaxity} \label{minimax}

For any stopping time $T$ and $1 \leq i \leq K$, we set $\Ic_i[T] = I_i \Exp_i[T]$. Without loss of generality, we restrict ourselves to $\Pro_{i}$-integrable
stopping times, thus, from Wald's identity it follows that 
$$
\Ic_i[T] = \Exp_i [Z_T^i]= \Exp_{i} \Bigl[ \log \frac{d \Pro_{i}}{d \Pro_{0}}\Big|_{\cF_{T}}\Bigr].
$$ 
In other words, $\Ic_i[T]$ is the expected KL divergence between $\Pro_{i}$ and $\Pro_{0}$ that is accumulated up to time $T$. Let $\mathbf{\hat{p}}=(\hat{p}_{1}, \ldots, \hat{p}_{K})$ denote the prior distribution for which
\begin{equation} \label{hat}
\hat{p_{i}} = \frac{\cL_{i} e^{\kappa_{i}}}{\sum_{j=1}^{K} \cL_{j} \, e^{\kappa_{j}}}, \quad 1 \leq i \leq K.
\end{equation}
Then, from  (\ref{highM})--(\ref{highN}) it follows that  $\hat{\pb}$ (almost) equalizes the KL-divergence that is accumulated by both the MiLRT and the WGLRT until stopping, in the sense that $\cI_{i}[M^{*}(\hat{\pb})]$ and $\cI_{i}[N^{*}(\hat{\pb})]$ are independent of $i$ up to an $o(1)$ term. Indeed, 
\begin{align}
\cI_{i}[M^{*}(\hat{\pb})] &= \log B+ \log \Bigl( \sum_{j=1}^{K} e^{ \cL_{j} \, \kappa_{j}} \Bigr) +o(1),  \\
\cI_{i}[N^{*}(\hat{\pb})] &= \log B+ \log \Bigl( \sum_{j=1}^{K} e^{ \cL_{j} \, \kappa_{j}} \Bigr)+ o(1),
\end{align}
where only negligible terms $o(1)$ may depend on $i$. If additionally $B$ is selected so that  $\Pro(d_{M^{*}(\hat{\pb})}=1) \sim \alpha$ and $\Pro(d_{N^{*}(\hat{\pb})}=1) \sim \alpha$, then  (\ref{perM})--(\ref{perN}) imply that for every $1 \leq i \leq K$,
\begin{align}
\cI_{i}[M^{*}(\mathbf{\hat{p}})] &= |\log \alpha| + \log \Bigl( \sum_{j=1}^{K} \gamma_{j} e^{\kappa_{j}} \Bigr) +o(1),  \label{newM} \\ 
\cI_{i}[N^{*}(\mathbf{\hat{p}})] & \leq |\log \alpha| + \log \Bigl( \sum_{j=1}^{K} \gamma_{j} e^{\kappa_{j}} \Bigr) +o(1),  \label{newN}
\end{align}
and consequently,  if we denote by $\hat{\cI}[T]= \max_{1 \leq i \leq K} \Ic_{i}[{T}]$ the maximal expected KL-divergence until stopping, we have
\begin{align}
\hat{\cI}[M^{*}(\mathbf{\hat{p}})] &= |\log \alpha| + \log \Bigl( \sum_{j=1}^{K} \gamma_{j} e^{\kappa_{j}} \Bigr) +o(1),  \label{newM2} \\ 
\hat{\cI}[N^{*}(\mathbf{\hat{p}})] & \leq |\log \alpha| + \log \Bigl( \sum_{j=1}^{K} \gamma_{j} e^{\kappa_{j}} \Bigr) +o(1).  \label{newN2}
\end{align}

The following theorem states that $\delta_{\rm mi}(\hat{\pb})$ and $\delta_{\rm gl}(\hat{\pb})$ are almost minimax in this KL-sense.

\begin{theorem} \label{T3}
Suppose that  conditions {\rm (A1)--(A3)} hold with $k=1$, i.e., $\alpha,\beta \rightarrow 0$ so that $|\log \alpha| \sim |\log \beta|$. Then,  
\begin{align} \label{MLB}
\inf_{\delta  \in \ccab}  \hat{\Ic}[{T}] = |\log \alpha| + \log \Bigl( \sum_{j=1}^{K} \gamma_{j} e^{\kappa_{j}} \Bigr) +o(1).
\end{align}
If additionally $A,B$ are selected so that  $\delta_{\rm mi}(\hat{\pb})$, $\delta_{\rm gl}(\hat{\pb})$ $\in \ccab$ and $k_{0}=1$, i.e.,
$\Pro(d_{M^{*}(\hat{\pb})}=1) \sim \alpha$ and $\Pro(d_{N^{*}(\hat{\pb})}=1) \sim \alpha$, then 
 \begin{align}
\inf_{\delta  \in \ccab}  \hat{\Ic}[{T}] &=   \hat{\Ic}[M^{*}(\hat{\pb})] +o(1), \label{equM} \\
\inf_{\delta  \in \ccab}  \hat{\Ic}[{T}] &=   \hat{\Ic}[N^{*}(\hat{\pb})] +o(1).  \label{equN}
\end{align} 
\end{theorem}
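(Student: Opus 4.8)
The plan is to prove the two directions of \eqref{MLB} separately and then read off \eqref{equM}--\eqref{equN}. The upper bound is essentially already in hand: taking $\delta=\delta_{\rm mi}(\hat{\pb})$ with thresholds calibrated so that $\Pro_0(d_{M^*(\hat{\pb})}=1)\sim\alpha$ and $\delta_{\rm mi}(\hat{\pb})\in\ccab$ (such a choice exists by Theorem~\ref{T0}), equation \eqref{newM2} gives $\inf_{\delta\in\ccab}\hat{\Ic}[T]\le\hat{\Ic}[M^*(\hat{\pb})]=|\log\alpha|+\log(\sum_j\gamma_j e^{\kappa_j})+o(1)$, and the same with $N^*(\hat{\pb})$ via \eqref{newN2}. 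Hence the real content is the matching asymptotic lower bound. Once that is established, \eqref{equM}--\eqref{equN} are immediate: since $M^*(\hat{\pb}),N^*(\hat{\pb})\in\ccab$, their value $\hat{\Ic}$ is sandwiched between $\inf_{\delta\in\ccab}\hat{\Ic}[T]$ and the upper bound from \eqref{newM2}--\eqref{newN2}, and both agree up to $o(1)$.

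For the lower bound I would use the standard minimax-over-Bayes device. For every probability vector $\pb=(p_1,\dots,p_K)$ with positive entries and every $\delta\in\ccab$ one has the trivial bound $\hat{\Ic}[T]=\max_i\Ic_i[T]\ge\sum_i p_i\,\Ic_i[T]$, whence
\[
\inf_{\delta\in\ccab}\hat{\Ic}[T]\ \ge\ \inf_{\delta\in\ccab}\sum_{i=1}^K p_i\,\Ic_i[T]\ =\ \inf_{\delta\in\ccab}\sum_{i=1}^K p_i\,I_i\,\Exp_i[T].
\]
The key reduction is that this last weighted-sample-size problem is exactly the one solved in Theorem~\ref{T2}, after rescaling the prior. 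Writing $p'_i=p_iI_i/(\sum_j p_jI_j)$ one has $\sum_i p_iI_i\Exp_i[T]=(\sum_j p_jI_j)\,\Exp^{\pb'}[T]$, so, the scalar factor being fixed, $\inf_{\delta\in\ccab}\sum_i p_iI_i\Exp_i[T]=(\sum_j p_jI_j)\,\inf_{\delta\in\ccab}\Exp^{\pb'}[T]$. Now Theorem~\ref{T2} applies to $\pb'$; the crucial simplification is that, since $p'_i/I_i=p_i/\sum_j p_jI_j$, the term $C_i(\pb')$ of \eqref{C} collapses to $C_i(\pb')=-\log p_i$. Substituting and clearing the factor $\sum_j p_jI_j$ leaves the clean expression
\[
\inf_{\delta\in\ccab}\sum_{i=1}^K p_i\,\Ic_i[T]\ =\ |\log\alpha|+\sum_{i=1}^K p_i\bigl(\kappa_i+\log\gamma_i-\log p_i\bigr)+o(1).
\]

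It then remains to optimize this lower bound over the free prior $\pb$. Its $\pb$-dependent part, $\sum_i p_i(\kappa_i+\log\gamma_i)-\sum_i p_i\log p_i$, is a linear term plus the Shannon entropy, so by the Gibbs variational principle it is maximized at $p_i\propto\gamma_i e^{\kappa_i}$, with maximal value $\log(\sum_j\gamma_j e^{\kappa_j})$. Inserting this $\pb$ yields $\inf_{\delta\in\ccab}\hat{\Ic}[T]\ge|\log\alpha|+\log(\sum_j\gamma_j e^{\kappa_j})+o(1)$, matching the upper bound and establishing \eqref{MLB}. I would flag the (mild) subtlety that the least favorable prior driving the \emph{lower} bound, $p_i\propto\gamma_i e^{\kappa_i}$, is \emph{not} the prior $\hat{\pb}\propto\cL_i e^{\kappa_i}=\gamma_i I_i e^{\kappa_i}$ used to build the equalizer test; the extra factor $I_i$ is precisely what the rescaling $\pb\mapsto\pb'$ absorbs, which is why both nevertheless deliver the same constant.

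The main obstacle I anticipate is making the reduction to Theorem~\ref{T2} rigorous rather than merely formal: one must fix $\pb$ (hence $\pb'$) \emph{before} letting $\alpha,\beta\to0$, so that the $o(1)$ of Theorem~\ref{T2}, which may depend on the prior, remains negligible after multiplication by the fixed constant $\sum_j p_jI_j$; and one must check that $\pb'$ is a legitimate strictly positive prior so that Theorem~\ref{T2} genuinely applies. Everything else---the max-versus-average inequality, the cancellation in $C_i(\pb')$, and the entropy maximization---is elementary.
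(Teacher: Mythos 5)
Your proposal is correct, and its skeleton is the same as the paper's: both obtain the lower bound in \eqref{MLB} by bounding the maximum $\hat{\Ic}[T]$ below by a weighted average and then invoking the weighted-expected-sample-size optimality of Theorem~\ref{T2}, and both read off the upper bound and \eqref{equM}--\eqref{equN} from \eqref{newM2}--\eqref{newN2}. The difference lies in how the lower bound is closed. The paper applies the averaging inequality with the \emph{fixed} weights $\hat{p}_i/I_i \propto \gamma_i e^{\kappa_i}$ and invokes Theorem~\ref{T2} for the single prior $\hat{\pb}$ (see \eqref{jet1}); tightness then comes for free from the equalization property \eqref{jet3}--\eqref{jet4}, i.e.\ from the fact that $\Ic_i[M^{*}(\hat{\pb})]$ is independent of $i$ up to $o(1)$, so average and maximum coincide for that test --- no optimization over priors is ever performed. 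You instead keep the prior $\pb$ free, rescale it to $p_i' \propto p_i I_i$ so that $\sum_i p_i \, \Ic_i[T]$ becomes exactly the objective of Theorem~\ref{T2}, observe the cancellation $C_i(\pb')=-\log p_i$ in \eqref{C}, and maximize the resulting bound $|\log\alpha|+\sum_i p_i(\kappa_i+\log\gamma_i-\log p_i)+o(1)$ over $\pb$ by the Gibbs/log-sum-exp principle. Your maximizer $p_i \propto \gamma_i e^{\kappa_i}$ is, after the rescaling, precisely $\hat{p}_i \propto \cL_i e^{\kappa_i}$ (since $\cL_i=\gamma_i I_i$), so at the optimum your computation reproduces the paper's in essence. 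What your route buys is an explanation of \emph{why} $\hat{\pb}$ is almost least favorable --- it is the entropy maximizer viewed through the $I_i$-rescaling --- rather than a bare verification that it works; what the paper's buys is brevity, since equalization makes the single-prior bound automatically tight. Your cautionary remarks (fixing $\pb$, hence $\pb'$, before letting $\alpha,\beta\to 0$ so the $o(1)$ of Theorem~\ref{T2} stays harmless, and strict positivity of $\pb'$) are well placed; and your implicit assumption that thresholds can be chosen with $\delta_{\rm mi}(\hat{\pb}),\delta_{\rm gl}(\hat{\pb})\in\ccab$ and $k_0=1$ is exactly the assumption the paper's own proof makes, so there is no gap on that score.
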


\begin{proof}
Suppose that thresholds $A$ and $B$ are selected so that $\delta_{\rm mi}(\hat{\pb}) \in \ccab$ and $\Pro(d_{M^{*}(\hat{\pb})}=1) \sim \alpha$. From Theorem \ref{T2} it follows that 
\begin{align} \label{jet1}
\sum_{i=1}^{K} \hat{p}_{i} \, \Exp_{i}[M^{*}(\mathbf{\hat{p}})] +o(1) &\leq \inf_{\delta \in \ccab}  \, \sum_{i=1}^{K} \hat{p}_{i} \,\Exp_{i}[T] = \inf_{\delta \in \ccab}  \sum_{i=1}^{K} \frac{\hat{p}_{i}}{I_{i}} \,\Ic_{i}[T]  \nonumber \\
&\leq \Bigl(\sum_{i=1}^{K} \frac{\hat{p}_{i}}{I_{i}} \Bigr)  \,  \inf_{\delta \in \ccab} \, \hat{\Ic}[T],
\end{align}
whereas from (\ref{newM}) and (\ref{newM2}) we have 
\begin{align}
 \sum_{i=1}^{K} \hat{p}_{i} \Exp_{i}[M^{*}(\mathbf{\hat{p}})] &=  
 \Bigl(\sum_{i=1}^{K} \frac{\hat{p}_{i}}{I_{i}} \Bigr) \, \Bigl[ |\log \alpha| + \log \Bigl( \sum_{j=1}^{K} \gamma_{j} e^{\kappa_{j}} \Bigr) +o(1) \Bigr]  \label{jet3} \\
 &= \Bigl(\sum_{i=1}^{K} \frac{\hat{p}_{i}}{I_{i}} \Bigr) \, \hat{\Ic}[M^{*}(\mathbf{\hat{p}})]. \label{jet4}
\end{align}
From (\ref{jet1}) and (\ref{jet3}) we obtain (\ref{MLB}), whereas from (\ref{jet1}) and (\ref{jet4}) we obtain (\ref{equM}). Finally, from (\ref{newN2}) and (\ref{MLB}) we obtain (\ref{equN}). 
\end{proof}

\section{How to Select $\pb$?} \label{sec:p}

In this section, we consider the specification of the prior distribution $\pb$, which determines the weights $\qb_0$ and $\qb_1$ of the MiLRT and the WGLRT when the weights are selected according to (\ref{choose}). Our goal is to select a \textit{robust} prior, which inflicts a small performance loss 
under every scenario. In other words, we want to avoid a prior distribution that leads to sequential tests with very good behavior for some  densities in $\Hyp_1$, but with poor behavior for others.

\subsection{Performance measures}
We will quantify the ``performance loss'' of the MiLRT (and similarly for the WGLRT) under $\Pro_{i}$ by the following measure, 
\begin{align*}
\cJ_{i}(\pb) &= \frac{\Exp_{i}[M^{*}(\pb)]- \Exp_{i}[S^{i}]}{\Exp_{i}[S^{i}]} , \quad 1 \leq i \leq K, 
\end{align*}
where we recall that $S^{i}$ is the SPRT  for testing $f_{0}$ against $f_{i}$. That is, $\cJ_{i}(\pb)$ represents the \textit{additional} expected sample size 
due to the uncertainty in the alternative hypothesis divided by the smallest possible expected sample size that is required for testing $f_{0}$ against $f_{i}$. Moreover, if 
$S^{i}$ has error probabilities $\alpha$ and $\beta$,  assumptions (A1)--(A3) hold and $k_{0}=1$, then 
from (\ref{ESSi_SPRT}) and (\ref{perM222}) it follows that  
\begin{align} \label{app}
\cJ_{i}(\mathbf{p}) &\approx \frac{C_{i}(\pb)}{|\log \alpha|+  \kappa_{i} + \log \gamma_{i}} = \frac{\log \Bigl[ \sum_{j=1}^{K} (p_{j}/I_{j}) \Bigr] + \log I_{i} -\log p_{i}}{|\log \alpha|+  \kappa_{i} + \log \gamma_{i}}  ,
\end{align}
where by $\approx$ we mean that the two sides differ by an $o(1)$ term.  From this expression we can see that the magnitude of $\cJ_{i}(\pb)$ is mainly determined by $K$, the cardinality of $\cA_{1}$, and the probability of type-I error $\alpha$. In particular, 
for every $1 \leq i \leq K$ and  $\mathbf{p}$, $\cJ_{i}(\mathbf{p})$ will be ``small'' when $|\log \alpha|$ is much larger than $\log K$, which implies that 
the choice of $\pb$ may make a difference only when $|\log \alpha|$ is not much larger than  $\log K$. 

\begin{table}[!h]
	\centering
		\caption{Asymptotic performance loss for different prior distributions}
		\begin{tabular}{|c|c|c|} \hline
	  $p_{i}$  & $q_{1}^{i}$ & $C_{i}(\pb)$      \\ \hline  \hline
	 $\cL_{i}$ &    1 &  $-\log(\gamma_{i}) + \log \Bigl( \sum_{j=1}^{K} \gamma_{j} \Bigr)$  \\ [2ex]  \hline 
	 $I_{i}$   &  $1/ \gamma_{i}$ &  $\log K$      \\ [2ex] \hline  
   $e^{\kappa_{i}} \cL_{i}$ &  $e^{\kappa_{i}}$ &   $-\log(\gamma_{i} \, e^{\kappa_{i}}) + \log \Bigl( \sum_{j=1}^{K} \gamma_{j} \, e^{\kappa_{j}} \Bigr)$    \\ [2ex] \hline
	   $1$   &  $1/\cL_{i}$ &  $\log(I_{i}) + \log \Bigl( \sum_{j=1}^{K} (1/I_{j}) \Bigr)$          \\ [2ex] \hline   
	 \end{tabular}
	\label{tab:1}
	\end{table}

Moreover, from (\ref{app}) it is clear that a good choice for $\pb$ would guarantee that $C_{i}(\pb)$ is ``small'' for every $1 \leq i \leq K$. In Table \ref{tab:1}, we present $C_{i}(\pb)$ for the almost least favorable distribution $\hat{\pb}$, defined in (\ref{hat}), as well as for some other intuitively appealing choices of $\pb$. In particular, we consider the priors $\pb^{I}$, $\mathbf{p^{\cL}}$, $\mathbf{p^{u}}$  which are defined so that 
$$p_{i}^{I} \propto I_{i}, \quad p_{i}^{\cL} \propto \cL_{i}, \quad p_{i}^{u} \propto 1,  \quad   1 \leq i \leq K.$$

Note that $\mathbf{p^{\cL}}$, $\mathbf{p^{\cI}}$, $\hat{\mathbf{p}}$ are ranked, in the sense that  
$\cL_{i} \leq I_{i} \leq e^{\kappa_{i}} \cL_{i}$, since $\cL_{i}= \gamma_{i} I_{i}$ and $\gamma_{i} \leq 1 \leq e^{\kappa_{i}} \gamma_{i}$.
Thus,  $\pb^{\cL}$ (resp. $\hat{\pb}$)  assigns relatively less (resp. more) weight than $\pb^{\cI}$ 
to a hypothesis as its ``signal-to-noise ratio'' increases. Note also that $\mathbf{p^{\cL}}$ and $\hat{\mathbf{p}}$ reduce to $\mathbf{p^{\cI}}$ 
when there is no overshoot effect, in which case $\kappa_{i}=0$ and $\gamma_{i}=1$, whereas all these three priors reduce to $\mathbf{p^{u}}$ in the symmetric case where $I_{i}$ and $\cH_{i}$ do not depend on $i$.

\subsection{Numerical comparisons}
	
In order to make some concrete comparisons, we focus on the multichannel setup (\ref{multisensor}), assuming that 
$\{g_{0}^{i},g_{1}^{i}\}$ can be embedded in a parametric family $g(x;\theta)$, so that 
\begin{equation} \label{parametric}
g_{0}^{i}(x)=g(x;\theta=0) \quad \text{and} \quad g_{1}^{i}(x)=g(x;\theta_{i}), \quad 1 \leq i \leq K,
\end{equation} 
where $\theta_{i}>0$ expresses the ``signal-to-noise ratio'' in channel $i$, $1 \leq i \leq K$.  

Consider the exponential model assuming that  
\begin{equation} \label{exponential}
g(x;\theta)= \frac{1}{1+\theta} e^{-x/(1+\theta)}, \; x>0.
\end{equation}
Then $I_{i}$, $\kappa_{i}$ and $\gamma_{i}$ take the following form
$$
I_{i}= \theta_{i}- \log(1+\theta_{i}), \quad \kappa_{i}= \theta_{i}, \quad \gamma_{i}=(1+\theta_{i})^{-1}.
$$
For the Gaussian model  $g(x)= \cN(x; \theta,1)$, where $\cN(x;\mu, \sigma)$ is  density of the normal distribution with mean $\mu$ and standard deviation $\sigma$, the above quantities become
\begin{align*}
 I_{i} &= \frac{\theta_{i}^{2}}{2}, \quad \gamma_{i}= \frac{1}{I_{i}} \exp\Bigl\{ -2 \, \sum_{n=1}^{\infty} \frac{1}{n} \, \Phi \Bigl( -\frac{\theta_{i}}{2} \, \sqrt{n} \Bigr) \Bigr\} , \\
\kappa_{i} &= 1 + \frac{\theta_{i}^{2}}{4}- i \, \sum_{n=1}^{\infty} \Bigl[ \frac{1}{\sqrt{n}} \, \phi \Bigl( \frac{\theta_{i}}{2} \, \sqrt{n} \Bigr) 
                  - \frac{\theta_{i}}{2} \, \Phi \Bigl( -\frac{\theta_{i}}{2} \, \sqrt{n} \Bigr)  \Bigr].
\end{align*}

Assume, for simplicity, that $\theta_{i}=4$ for  $1 \leq i \leq K/2$ and $\theta_{i}=\theta$ for $K/2 < i \leq K$. Thus, the ``expected'' signal  in the first (resp.\ last)  channel is stronger (resp.\ weaker) than the signal in the last (resp.\ first) channel  when $\theta<4$ (resp.\ $\theta>4$). 

Our goal is to evaluate $\cJ_{1}(\pb)$ and $\cJ_{K}(\pb)$, i.e., the inflicted  performance loss when signal is present in the first and last channel respectively, as a function of $\theta$, for different prior distributions. We do so using asymptotic approximation (\ref{app}), in which we have set $K=10$ and $\alpha=10^{-4}$, and we present the results for the exponential case in Figure~\ref{fig:1}  and for the Gaussian case  in Figure~\ref{fig:2}.

The plots in both figures show that setting $\pb=\mathbf{\hat{p}}$  (resp.\ $\pb=\mathbf{p^{u}}$) leads to a better performance when signal is present in the channel
with stronger (resp. weaker) signal-to-noise ratio. However, the inflicted performance loss when the signal is present in the other channel can be very high. On the other hand, setting $\pb=\mathbf{p}^{\mathcal I}$  or $\pb=\mathbf{p}^{\mathcal L}$ leads to a more robust performance, since the  performance loss is similar (and relatively small) irrespectively of the channel in which signal is present and of the relative signal strengths.

\begin{figure}[!h]
\centering
\includegraphics[width=0.4\linewidth, height=0.5\linewidth]{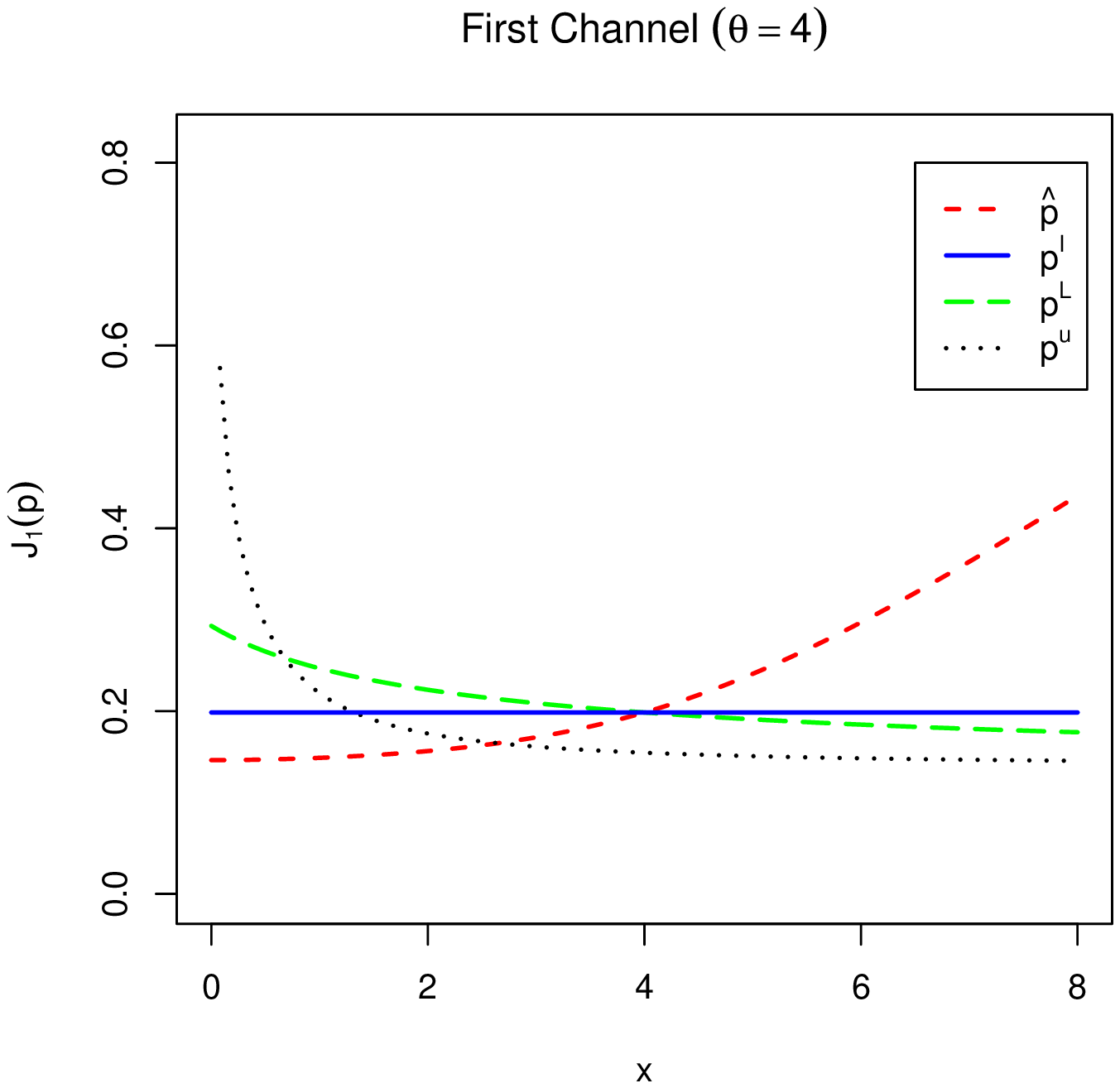}
\includegraphics[width=0.4\linewidth, height=0.5\linewidth]{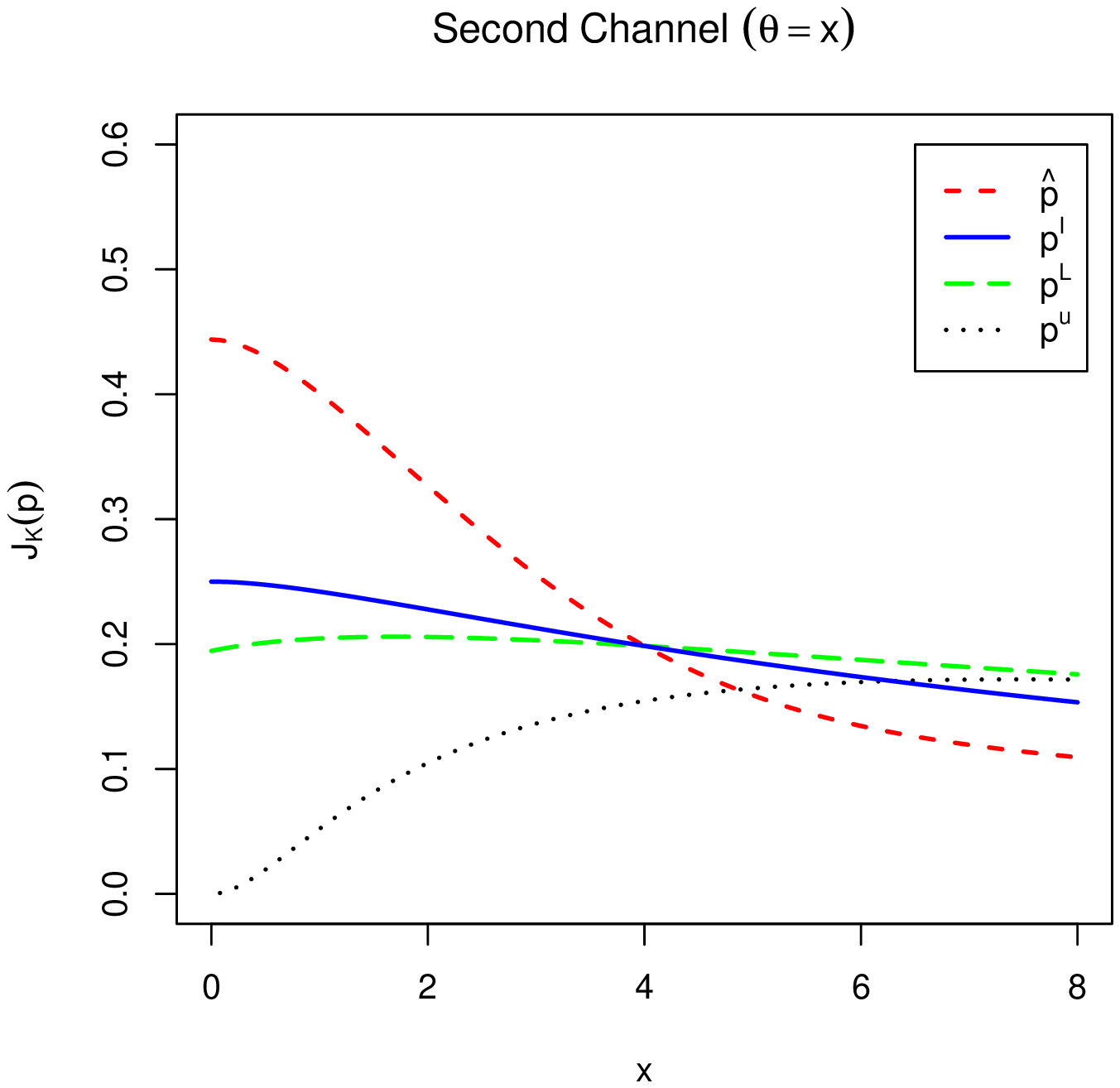}
\caption{Performance loss for different prior distributions in a multichannel problem with exponential data.}
\label{fig:1}
\end{figure}

\begin{figure}[!h]
\centering
\includegraphics[width=0.4\linewidth, height=0.5\linewidth]{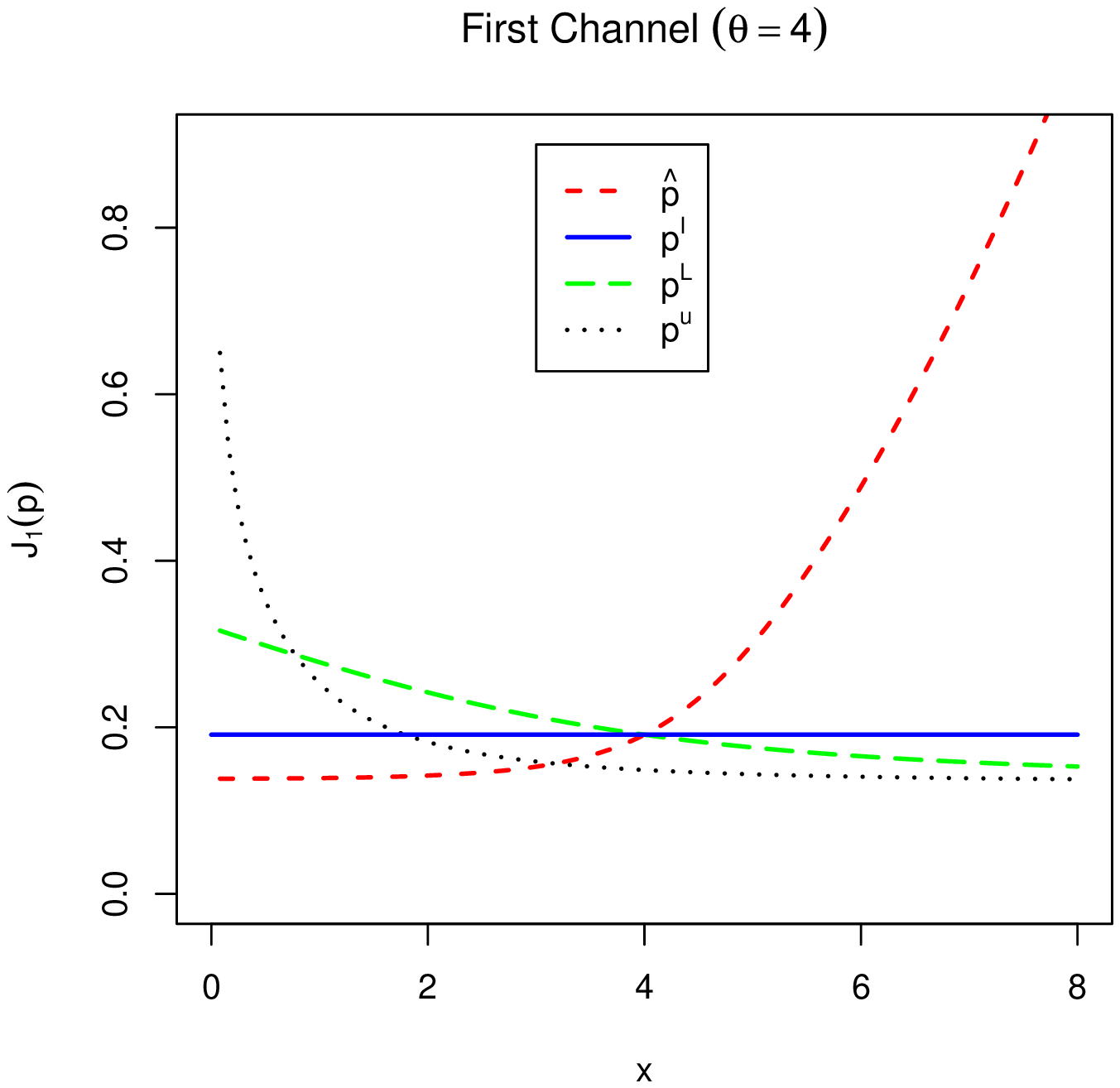} 
\includegraphics[width=0.4\linewidth, height=0.5\linewidth]{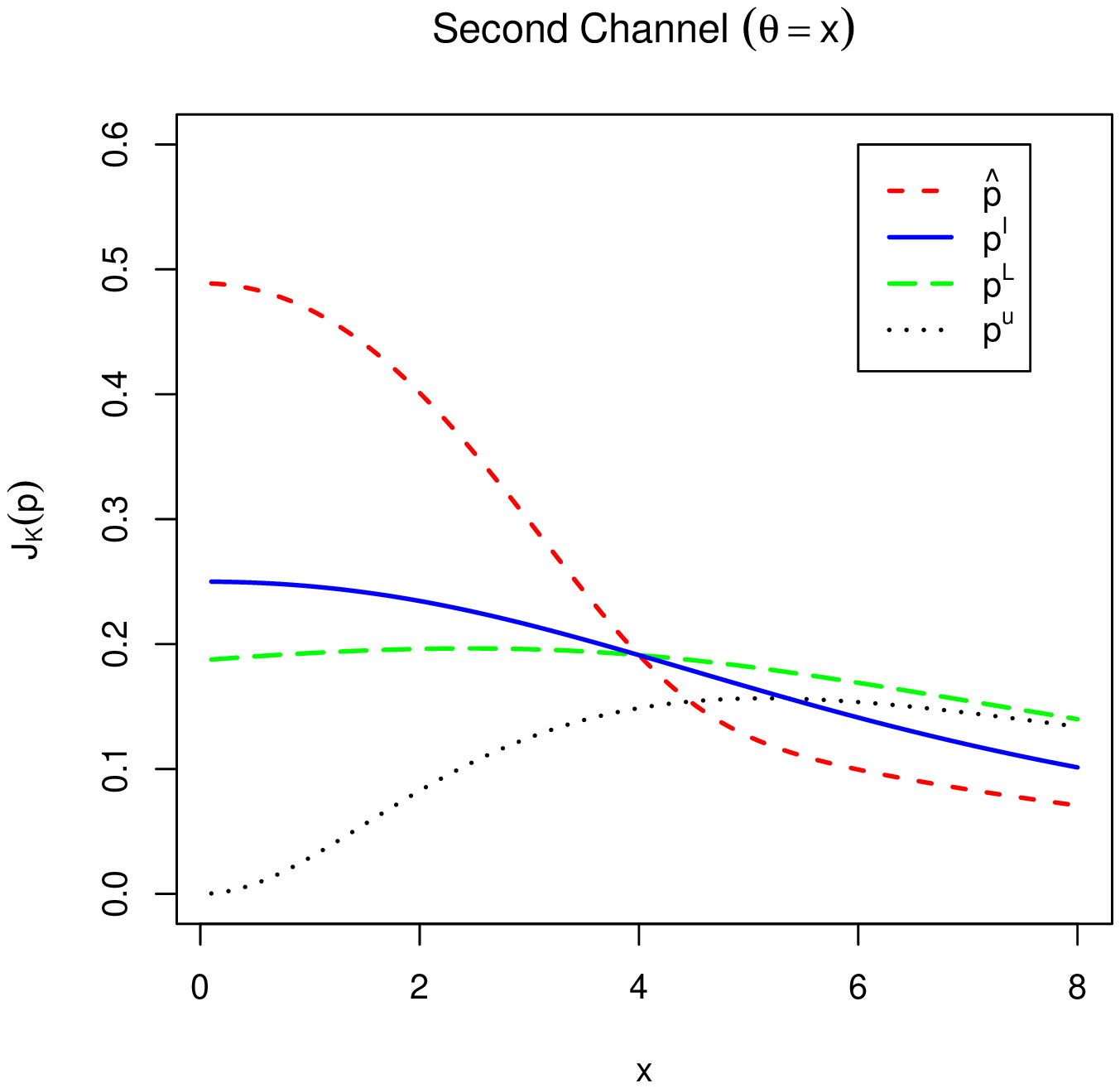}
\caption{Performance loss for different prior distributions in a multichannel problem with Gaussian data.} 
\label{fig:2}
\end{figure}

\section{Monte Carlo Simulations}\label{sec:MC}

In this section, we present a simulation study whose goal is to check the accuracy of the asymptotic approximations established in Section \ref{sec:AM} and 
to compare the MiLRT with the WGLRT for realistic probabilities of errors. In particular, we consider the multichannel setup (\ref{multisensor}) with $K=3$ channels, 
exponential distributions given by (\ref{parametric})--(\ref{exponential}) and parameter values selected according to Table~\ref{tab:param}. Since our main emphasis is on the fast detection of signal, we set $\beta=10^{-2}$ and  consider different values of $\alpha$. Moreover, we choose the thresholds $A$ and $B$ according to (\ref{B}), whereas we select the weights according to (\ref{choose}) with $\pb=\mathbf{\pb}^{\mathcal I}$. 

\begin{table}[!h]
\centering
\caption{Parameter values in a multichannel problem with exponential data \label{tab:param}}
\begin{tabular}{c||c|c|c||c|c|c||c|c|}
$\theta_{i}$ & $I_{i}$ & $\kappa_{i}$ & $\gamma_{i}$  & $q_{1}^{i}$ & $q_{0}^{i}$  \\ \hline \hline
0.5 & 0.095   & 0.5 & 0.67 &   0.308  & 0.013   \\
1   & 0.584	  & 1   & 0.4  & 0.837  & 0.078    \\ 
2   & 0.901   & 2   & 0.33 & 1.380  & 0.138    \\
\end{tabular}
\end{table}

In the first three columns of Table~\ref{tab:2} we compare the type-I error probabilities for the two tests, which have been computed based on simulation experiments, against the target level $\alpha$. More specifically, these error probabilities are computed using representations (\ref{is1}) and (\ref{is2}) and \textit{importance sampling}, a simulation technique whose application in Sequential Analysis goes back to \citet{sieg-AS76}.  These results indicate that 
selecting $B$ according to (\ref{choice}) leads to type-I error probabilities very close to $\alpha$ for both tests,  even for relatively large $\alpha$. In particular, we see that $\Pro_{0}(d_{M^{*}} = 1)$ is slightly larger than $\alpha$,  which is expected, since (\ref{choice}) implies 
$\Pro_{0}(d_{M^{*}} = 1) \sim \alpha$, whereas we also observe that $\alpha$ is a sharp upper bound for  $\Pro_{0}(d_{N^{*}} = 1)$, 
the type-I error probability of the WGLRT.

\begin{table}[h]
\centering
\caption{Type-I error probabilities and the expected sample sizes under $\Pro_{i}$, $i=1,2,3$ for different values of the target probability
$\alpha$ when $\beta=10^{-2}$. \label{tab:2}}
\begin{tabular}{|c||c|c|c|c|c||c|c|c|} \hline
$\alpha$ & $\frac{\Pro_{0}(d_{M^{*}} = 1)}{\alpha}$ & $\frac{\Pro_{0}(d_{N^{*}} = 1)}{\alpha} $& $\Exp_{1}[M^{*}]$ & $\Exp_{1}[N^{*}]$ & $\Exp_{2}[M^{*}]$ & $\Exp_{2}[N^{*}]$ & $\Exp_{3}[M^{*}]$ & $\Exp_{3}[N^{*}]$ \\ \hline \hline
$10^{-2}$ & 1.051 &  0.994 & 59.9  & 59.4  &  17.8  & 19.4  & 6.2  &  7.3 \\
$10^{-3}$ & 1.033 &  0.995 & 84.1  & 84.1  &  25.7  & 27.1  &  9.0  &  9.9  \\
$10^{-4}$ & 1.025 &  0.996 & 108.5 & 108.3 &  33.7 & 34.6  &  11.7  &  12.4   \\ 
$10^{-5}$ & 1.017 &  0.996 & 132.5 & 132.3 &  41.4 & 42.0 & 14.3 &  15.0   \\ \hline
\end{tabular}
\end{table}

In the remaining columns of Table~\ref{tab:2}, we present for both tests the (simulated) expected sample size under $\Pro_{i}$, $i=1,2,3$ and in Figure  \ref{fig:3} we plot these values against the corresponding (simulated) type-I error probabilities. In these graphs, we also superimpose 
asymptotic approximation (\ref{perM}) (dashed line), as well as the asymptotic performance of the corresponding SPRT, (\ref{ESSi_SPRT}), which is given by the 
solid line. Triangles correspond to the WGLRT and circles to the MiLRT. From these results we can see, first of all, that asymptotic approximation  (\ref{perM}) is very accurate for both tests. Moreover, we can see that 
the two tests have similar performance. In particular, their performance is identical when signal is present in the channel with the smallest signal strength. In the other two cases, the MiLRT seems to perform slightly better, however the difference is small.

\begin{figure}[!h]
\centering
\includegraphics[width=0.4\linewidth, height=0.5\linewidth]{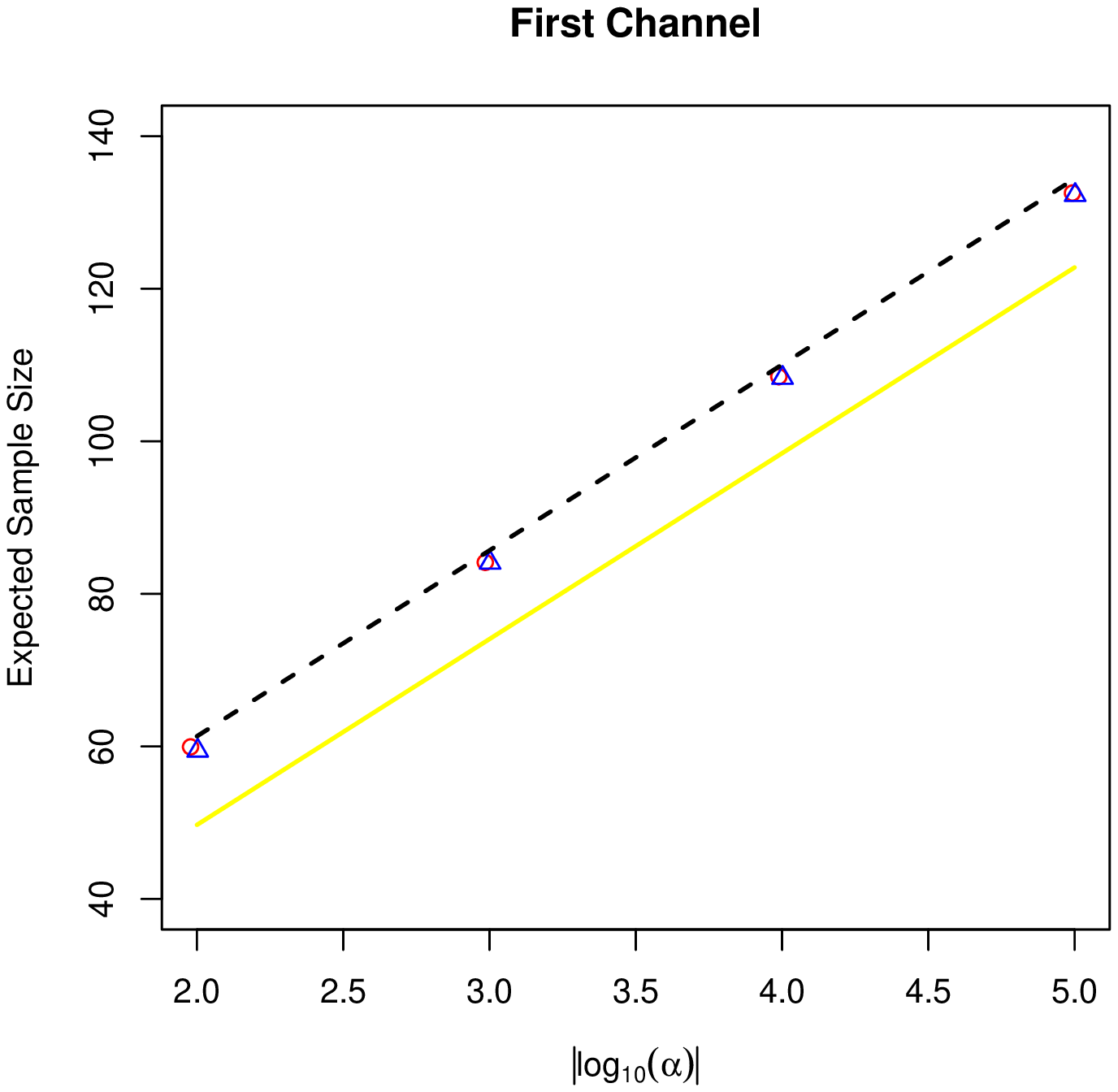}
\includegraphics[width=0.4\linewidth, height=0.5\linewidth]{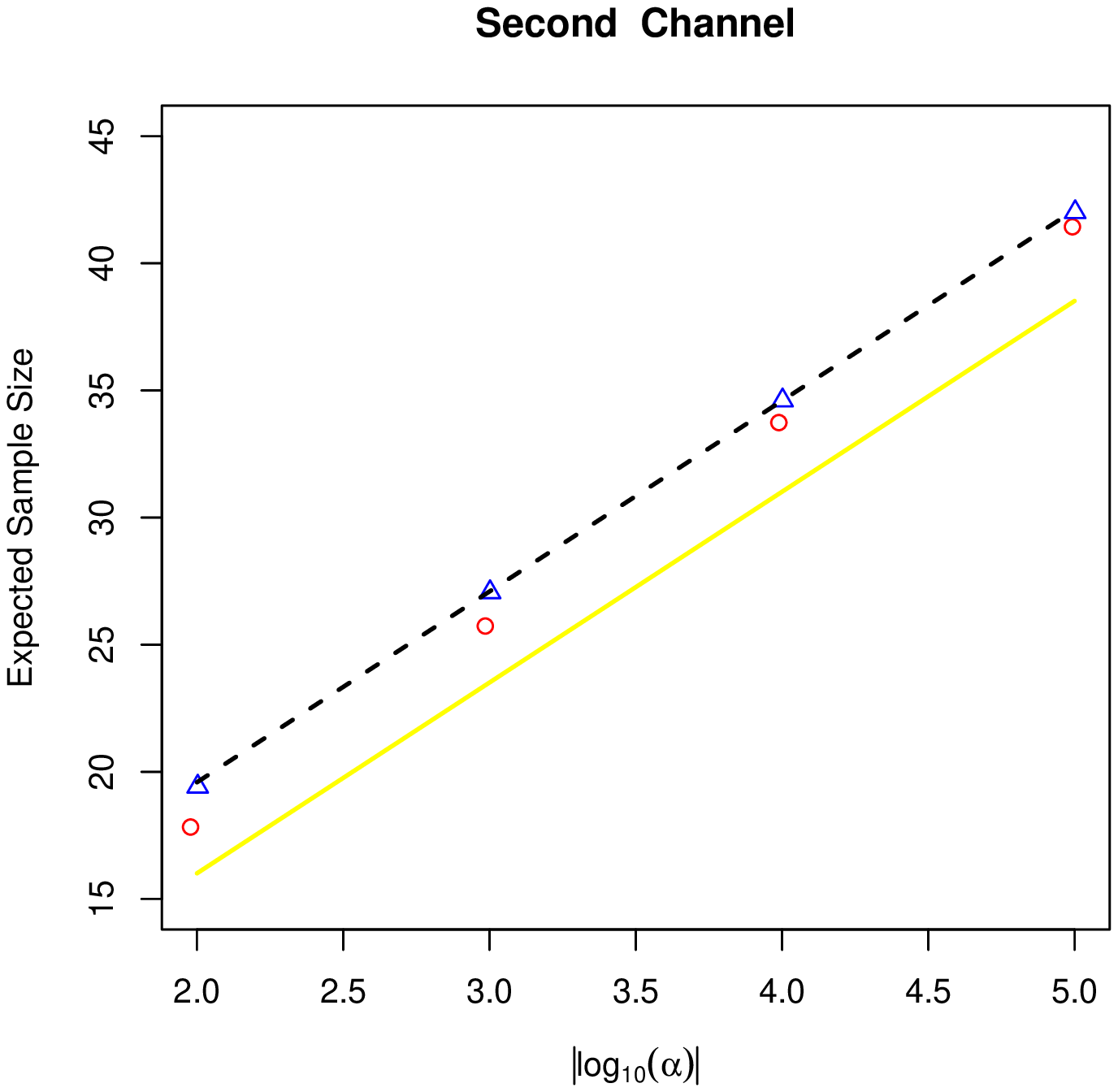}
\includegraphics[width=0.4\linewidth, height=0.5\linewidth]{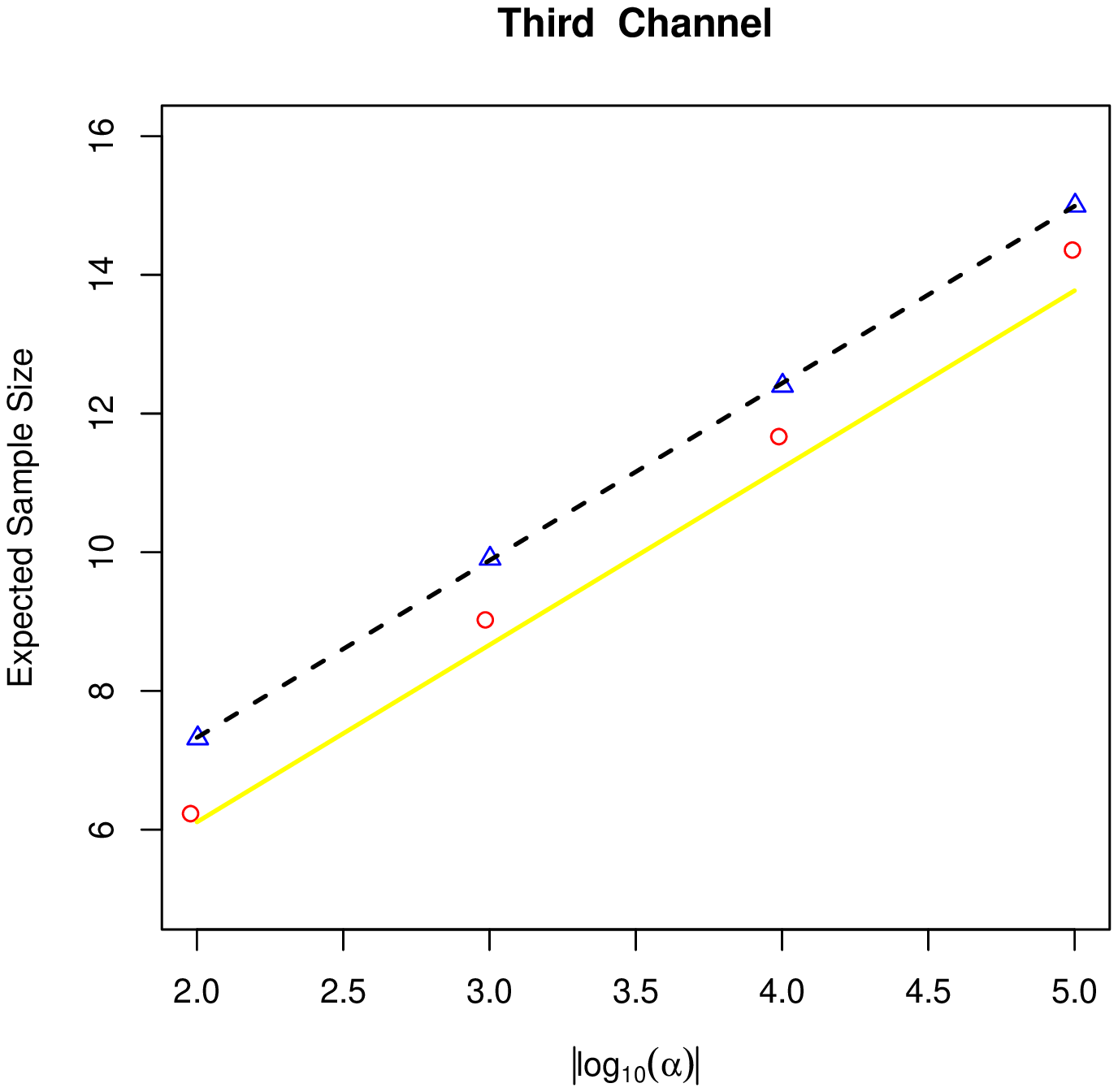} 
\label{fig:3}
\caption{Expected sample size of MiLRT and WGLRT under $\Pro_{i}$ against type-I error probability (in logarithmic scale), $i=1,2,3$. The dashed line represents asymptotic approximation (\ref{perM}), whereas the solid line refers to (\ref{ESSi_SPRT}), the asymptotic performance of the corresponding SPRT. The triangles (resp.\ circles) represent the simulated performance of the WGLRT (resp.\ MiLRT).}
\end{figure}

 \section{Conclusion}\label{sec:Concl}

In this work, we performed a detailed analysis and optimization of weighted GLR and mixture-based sequential  tests when the null hypothesis is simple and the alternative hypothesis is composite but discrete. Irrespectively of the choice of weights, both tests minimize asymptotically, at least to first order and often to second order, the expected sample size under each possible scenario as error probabilities go to 0.
However, with appropriate selection of weights, both test achieve higher-order asymptotic optimality properties. Specifically, they minimize a weighted expected sample size as well as the expected Kullback--Leibler divergence in the least favorable scenario to within asymptotically negligible terms as error probabilities go to zero. Moreover, based on simulation experiments, we can conclude that the  two tests  perform similarly  even for not too small error probabilities. 
Finally, we believe that the proposed approach can be extended to sequential testing of multiple hypotheses, a substantially more complex problem that we plan to consider elsewhere.   

\section*{Acknowledgements}

We are grateful  to a referee whose comments improved the presentation. This work was supported by the U.S.\ Air Force Office of Scientific Research under MURI grant FA9550-10-1-0569, by the U.S.\ Defense Threat Reduction Agency under grant HDTRA1-10-1-0086, by the U.S.\  Defense Advanced Research Projects Agency under grant W911NF-12-1-0034 and by the U.S.\ National Science Foundation under grants CCF-0830419, EFRI-1025043, and DMS-1221888 at the University of Southern California, Department of Mathematics.



\vskip .65cm
\noindent
 University of Southern California, Department of Mathematics,
3620 S. Vermont Avenue, Los Angeles, CA 90089-2532, USA\\
\noindent E-mail: fellouri@usc.edu
\vskip 2pt

\

\noindent University of Southern California, Department of Mathematics,
3620 S. Vermont Avenue, Los Angeles, CA 90089-2532, USA\\
\noindent
E-mail: tartakov@usc.edu
\vskip .3cm

(Received March 2012; Revised January 2013; Accepted .....)



\begin{thebibliography}{99}

\bibitem[Chernoff(1972)]{Chernoff72}
Chernoff, H. (1972). {\em Sequential Analysis and Optimal Design}. SIAM, Philadelphia.    

\bibitem[Chow {\em et al.}(1971)]{ChowRobbinsSiegmund-book71} 
\text{Chow, Y. S. , Robbins, H. and Siegmund, D.} (1971). \textit{Great Expectations: The Theory of Optimal Stopping}.  Houghton Mifflin, Boston.

\bibitem[Bhattacharya and Rao(1986)]{bhatrao86} 
\text{Bhattacharya, R. N. and  Rao, R. R.} (1986). \textit{Normal Approximation and Asymptotic Expansions}. Wiley, New York.

\bibitem[Dragalin(1999)]{Drag} 
Dragalin, V. (1999). Asymptotics for a sequential selection procedure. \textit{Statist.\ Decisions} {\bf 4}, 123-137.

\bibitem[Dragalin {\em et al.}(2000)]{dragalin-it00} 
Dragalin, V. P., Tartakovsky, A. G., and Veeravalli, V. V. (2000). Multihypothesis sequential probability ratio tests - Part II: Accurate asymptotic expansions for the expected sample size. \textit{IEEE Trans.\ Inform.\ Theory} {\bf 46}, 1366-1343.


\bibitem[Dragalin and Novikov(1999)]{DragNov} 
Dragalin, V. and Novikov, A. (1999). Adaptive sequential tests for composite hypotheses. \textit{Surveys in Applied and Industrial Mathematics, TVP press} {\bf 6}, 387-398.

\bibitem[Fellouris and Tartakovsky(2012)]{feltar12} 
Fellouris, G. and  Tartakovsky, A. G. (2012). Nearly minimax mixture-based open-ended sequential tests. {\em Sequential Analysis} {\bf 31}, 297-325.

\bibitem[Gut(2008)]{gut} Gut, A. (2008). \textit{Stopped Random Walks, Second Edition}, Springer. 

\bibitem[Kiefer and Sacks(1963)]{KieferSacks_AMS63}	
Kiefer, J. and Sacks, J. (1963).  Asymptotically optimal sequential inference and design, {\em Ann.\ Math.\ Statist.} {\bf 34}, 705-750.

\bibitem[Lai(1988)]{Lai_AS88}
Lai, T. L. (1988). Nearly optimal sequential tests of composite hypotheses. {Ann.\ Statist.} {bf 16}, 856-886.

\bibitem[Lai(2004)]{Lai-sa04} 
Lai, T. L. (2004). Likelihood ratio identities and their applications to sequential analysis. \textit{Sequential Analysis} {\bf 23}, 467-497.

\bibitem[Lai and Siegmund (1977)]{LaiSieg-77} 
Lai, T.  L. and Siegmund, D. (1977). A nonlinear renewal theory with applications to sequential analysis~I. \textit{Ann.\ Statist.} {\bf 5}, 628-643.

\bibitem[Lai and Siegmund (1979)]{LaiSieg-79} 
Lai, T.  L. and Siegmund, D. (1979). A nonlinear renewal theory with applications to sequential analysis~II. \textit{Ann.\ Statist.} {\bf 7}, 60-76.

\bibitem[Lorden(1967)]{Lorden_AMS68}
Lorden, G. (1967). Integrated risk of asymptotically {Bayes} sequential tests. {\em Ann.\ Math.\ Statist.} {\bf 38}, 1399-1422.

\bibitem[Lorden(1973)]{Lorden-AS73} 
Lorden, G. (1973). Open-ended tests for Koopman--Darmois families. \textit{Ann.\ Statist.} {\bf 1}, 633-643.

\bibitem[Lorden(1977)]{Lorden-AS77} 
Lorden, G. (1977). Nearly optimal sequential tests for finitely many parameter values. \textit{Ann.\  Statist.} {\bf 5}, 1-21.

\bibitem[Pavlov(1990)]{Pavlov-TPA90} 
Pavlov, I. V. (1990). A sequential procedure for testing composite hypotheses with application to the Kiefer--Weiss problem.  \textit{Theory Probab.\ Appl.} {\bf 35}, 280-292.

\bibitem[Pollak and Siegmund(1975)]{PollakSiegmund-AS75} 
Pollak, M. and Siegmund, D. (1975). Approximations to the expected  sample size of certain sequential tests. \textit{Ann.\  Statist.} {\bf 3}, 1267-1282.

\bibitem[Robbins and Siegmund(1970)]{RobbinsSiegmund-Berkeley70} 
Robbins, H. and Siegmund, D. (1970). A class of stopping rules for testing parameter hypotheses. In {\em Proceedings of the Sixth Berkeley Symposium on
Mathematical Statistics and Probability} {\bf 4} (Edited by K. M. Le Cam,  J. Neyman, and E. L. Scott), 37-41. University of California Press, Berkeley.

\bibitem[Robbins and Siegmund(1974)]{RobbinsSiegmund-AS74} 
Robbins, H. and Siegmund, D. (1974). The expected sample size of some tests of power one.  \textit{Ann.\ Math.\ Statist.}  {\bf 2}, 415-436.

\bibitem[Schwarz(1962)]{Schwarz_AMS62}       
Schwarz, G. (1962). Asymptotic shapes of {Bayes} sequential testing regions. {\em Ann.\ Math.\ Statist.} {\bf 33}, 224-236.

\bibitem[Siegmund(1975)]{sieg-JRSSSB} 
Siegmund, D. (1975). Error probabilities and average sample number of the sequential probability ratio test. {\em J. Roy. Statist. Soc. Ser. B}  {\bf 37}, 394-401. 

\bibitem[Siegmund(1976)]{sieg-AS76} 
Siegmund, D. (1976). Importance sampling in the Monte Carlo study of sequential tests. \textit{Ann.\ Statist.}  {\bf 4}, 673-684.  


\bibitem[Siegmund(1985)]{sieg-book} 
Siegmund, D.(1985). {\em Sequential Analysis: Tests and Confidence Intervals}. Springer-Verlag, New York.  


\bibitem[Tartakovsky(2003)]{Tartakovsky-book1991} 
Tartakovsky, A. G. (1991). {\em Sequential Methods in the Theory of Information Systems}.  Radio \& Communications, Moscow.

\bibitem[Tartakovsky {\em et al.}(2003)]{Tartakovskyetal-IEEEIT03} 
Tartakovsky, A. G., Li, X. R., and Yaralov, G. (2003). Sequential detection of targets in multichannel systems.  \textit{IEEE Trans.\ Inform.\ Theory}  {\bf 49}, 425-445.

\bibitem[Tartakovsky {\em et al.}(2006)]{Tartakovskyetal-SM06} 
Tartakovsky, A. G., Rozovskii, B. K., Bla\'zek, R. B., and Kim, H. (2006). Detection of intrusions in information systems by sequential change-point methods. 
\textit{Statistical Methodol.} {\bf 3}, 252-293.

\bibitem[Wald(1944)]{Wald44} 
Wald, A. (1944). Sequential tests of statistical hypotheses.  \textit{Ann.\ Math.\ Statist.}  {\bf 16}, 117-186.

\bibitem[Wald and Wolfowitz(1948)]{WaldWolfowitz48} 
Wald, A. and Wolfowitz, J. (1948). Optimum character of the sequential probability ratio test.  \textit{Ann.\ Math.\ Statist.}  {\bf 19}, 326-339.

\bibitem[Woodroofe(1976)]{Woodroofe-AP76} 
Woodroofe, M.  (1976). A renewal theorem for curved boundaries and moments of first passage times. \textit{Ann.\ Probab.} {\bf 4}, 67-80. 

\bibitem[Woodroofe(1982)]{Woodroofe-book82} 
Woodroofe, M. (1982). \textit{Nonlinear Renewal Theory in Sequential Analysis}. SIAM, Philadelphia.

\bibitem[Zhang(1988)]{Zhang-AP88}  
Zhang, C.H. (1988). A nonlinear renewal theory.  \textit{Ann.\ Probab.} {\bf 16}, 793-825.

\end{thebibliography}
\end{document}